\newtheorem{theorem}{Theorem}
\newtheorem{lemma}{Lemma}
\newtheorem{definition}{Definition}
\newtheorem{corollary}{Corollary}
\begin{document}
{\selectlanguage{english}
\binoppenalty = 10000 %
\relpenalty   = 10000 %

\pagestyle{headings} \makeatletter
\renewcommand{\@evenhead}{\raisebox{0pt}[\headheight][0pt]{\vbox{\hbox to\textwidth{\thepage\hfill \strut {\small Grigory. K. Olkhovikov}}\hrule}}}
\renewcommand{\@oddhead}{\raisebox{0pt}[\headheight][0pt]{\vbox{\hbox to\textwidth{{Expressive power of basic modal intuitionistic logic}\hfill \strut\thepage}\hrule}}}
\makeatother

\title{Expressive power of basic modal intuitionistic logic\\
as a fragment of classical FOL}
\author{Grigory K. Olkhovikov\\ Dept of Philosophy II, Ruhr-Universit\"{a}t Bochum\\Dept. of Philosophy, Ural Federal University
\\
tel.: +49-15123307070\\
email: grigory.olkhovikov@rub.de, grigory.olkhovikov@gmail.com}
\date{}
\maketitle
\begin{quote}
{\bf Abstract.} The paper treats $4$ different fragments of
first-order logic induced by their respective versions of Kripke
style semantics for modal intuitionistic logic. In order to
capture these fragments, the notion of asimulation, defined in
\cite{Ol13}, is modified and extended to yield Van Benthem type of
semantic characterization of their respective expressive powers.
It is shown further, that this characterization can be easily
carried over to arbitrary first-order definable subclasses of
classical first-order models.
\end{quote}

\begin{quote}
{\bf Keywords.} model theory, modal logic, intuitionistic logic,
propositional logic, bisimulation, Van Benthem's theorem.
\end{quote}

It was shown in \cite{Ol13}  and \cite{Ol14} that both
intuitionistic first-order logic and its propositional fragment,
viewed as a different fragments of classical first-order logic,
admit of a full analogue of modal characterization
theorem\footnote{See, e.g. \cite[Ch.1, Th. 13]{Blackburn2006}.}
where invariance with respect to bisimulations is replaced with
invariance with respect to (first-order) asimulations
respectively. The present paper extends these results onto the
main versions of basic modal intuitionistic logic.

The layout of the paper is as follows. Section \ref{S:Prel} starts
with notational conventions, after which we introduce the main
variants of Kripke style semantics for the basic modal
intuitionistic system. All in all we consider $4$ different
variants of semantics, of which $2$ are easily discharged by the
versions of clauses employed in \cite{Ol13} and \cite{Ol14} for
basic intutionistic logic. However, for the other $2$ systems
their semantical characterization is less obvious, mainly due to
their treatment of diamond modality.

Section \ref{S:def} then starts with the main task of the present
paper. It contains definitions for all the four variants of
extension of basic asimulation notion to be employed in semantic
characterization of their respective variants of basic modal
intuitionistic logic. We also formulate here the main results of
the paper, although their proofs are postponed till Sections
\ref{S:proof} and \ref{S:other}. Then Section \ref{S:Rest} is
devoted to characterization of modal intuitionistic fragments of
FOL modulo first-order definable classes of models. Section
\ref{S:conclusion} gives conclusions and drafts directions for
future work.

\section{Preliminaries}\label{S:Prel}

\subsection{Notation}\label{S:Not}

A \emph{formula} is a formula of classical predicate logic without
identity whose predicate letters are in vocabulary $\Sigma =
\{\,R^2, R^2_\Box, R^2_\Diamond, P_1^1,\ldots P_n^1,\ldots\,\}$.
We assume $\{\, \bot, \to, \vee, \wedge, \forall, \exists\,\}$ as
the set of basic connectives and quantifiers for this variant of
classical first-order language, which we call \emph{correspondence
language}. A model is a classical first-order model of
correspondence language. We refer to correspondence formulas with
lower-case Greek letters $\varphi$, $\psi$, and $\chi$, and to
sets of correspondence formulas with upper-case Greek letters
$\Gamma$ and $\Delta$. If $\varphi$ is a correspondence formula,
then we associate with it the following finite vocabulary
$\Sigma_\varphi \subseteq \Sigma$ such that $\Sigma_\varphi =
\{\,R^2,R^2_\Box, R^2_\Diamond\,\} \cup \{\,P_i \mid P_i \text{
occurs in }\varphi\,\}$. More generally, we refer with $\Theta$ to
an arbitrary subset of $\Sigma$ such that $R \in \Theta$. If
$\psi$ is a formula and every predicate letter occurring in $\psi$
is in $\Theta$, then we call $\psi$ a $\Theta$-formula.

 We refer to sequence $x_1,\dots, x_n$ of any objects as
$\bar{x}_n$. We identify a sequence consisting of a single element
with this element. If all free variables of a formula $\varphi$
(formulas in $\Gamma$) coincide with a variable $x$, we write
$\varphi(x)$ ($\Gamma(x)$).

By degree of a classical first-order formula we mean the greatest
number of nested quantifiers occurring in it. Degree of a formula
$\varphi$ is denoted by $r(\varphi)$. Its formal definition by
induction on the complexity of $\varphi$ goes as follows:
\begin{align*}
&r(\bot) = r(\varphi) = 0 &&\text{for atomic $\varphi$}\\
&r(\varphi \circ \psi) = max(r(\varphi), r(\psi)) &&\text{for $\circ \in \{\,\wedge, \vee, \to\,\}$}\\
&r(Qx\varphi) = r(\varphi) + 1 &&\text{for $Q \in \{\,\forall,
\exists\,\}$}
\end{align*}

For $k \in \mathbb{N}$, we say that $\Theta$-formula $\varphi(x)$
such that $r(\varphi) \leq k$ is a $(\Theta, x, k)$-formula.

For a binary relation $S$ and any objects $s, t$ we abbreviate the
fact that $s\mathrel{S}t \wedge t\mathrel{S}s$ by
$s\mathrel{\overset{\leftrightarrow}{S}}t$.

We use the following notation for models of classical predicate
logic:
\[
M = \langle U, \iota\rangle, M_1 = \langle U_1, \iota_1\rangle,
M_2 = \langle U_2, \iota_2\rangle,\ldots , M' = \langle U',
\iota'\rangle, M'' = \langle U'', \iota''\rangle,\ldots,
\]
where the first element of a model is its domain and the second
element is its interpretation of predicate letters. If $k \in
\mathbb{N}$ then we write $R_k$ ($R_{\Box k}$, $R_{\Diamond k}$)
as an abbreviation for $\iota_k(R)$ ($\iota_k(R_\Box)$ ,
$\iota_k(R_\Diamond)$). If $a \in U$ then we say that $(M, a)$ is
a pointed model. Further, we say that $\varphi(x)$ is true at $(M,
a)$ and write $M, a \models \varphi(x)$ iff for any variable
assignment $\alpha$ in $M$ such that $\alpha(x) = a$ we have $M,
\alpha \models \varphi(x)$. It follows from this convention that
the truth of a formula $\varphi(x)$ at a pointed model is to some
extent independent from the choice of its only free variable.
Moreover, for $k \in \mathbb{N}$ we will sometimes write $a
\models_k \varphi(x)$ instead of $M_k, a \models \varphi(x)$.

A modal intuitionistic formula is a formula of modal
intuitionistic propositional logic, where $\{\,\bot, \to, \vee,
\wedge, \Box, \Diamond\,\}$ is the set of basic connectives and
modal operators, and $\{\,p_n\mid n \in \mathbb{N}\,\}$ is the set
of propositional letters. We refer to intuitionistic formulas with
letters $I, J, K$, possibly with primes or subscripts.

\subsection{Definitions of basic modal intuitionistic
logic}\label{S:int}

There exist different versions of basic system of modal
intuitionistic logic. In these paper we only consider versions
that have a Kripke-style semantics associated with them, and we
will view these versions via the lens of their respective
Kripke-style semantics.

Quite naturally, Kripke-style semantics for a given version of
intuitionistic modal logic is normally built as an extension of
Kripke semantics for basic intuitionistic propositional logic;
that is to say, the models extend Kripke models for basic
propositional logic and the satisfaction clauses for $\bot$,
$\to$, $\vee$, $\wedge$ are left unchanged (cf. \cite[Definition
7.1 and Definition 7.2]{Mints00})

The new components in the models are one or more additional binary
relations between states which are needed to handle the
satisfaction clauses for $\Box$ and $\Diamond$. In the most
general case both modal operators are handled by separate
relations $R_\Box$ and $R_\Diamond$, although not infrequently one
assumes that $R_\Box$ and $R_\Diamond$ do coincide\footnote{This
is the case, e.g., for all the systems mentioned in
\cite[Ch.3]{Simp1994}.} or are otherwise non-trivially related. It
is also quite common to assume different conditions connecting
$R_\Box$ and $R_\Diamond$ with accessibility relation $R$, like,
e.\,g. assuming that
$$R\circ R_\Box \subseteq R_\Box\circ R.$$

Both the condition that $R_\Box = R_\Diamond$ and the other
conditions mentioned in this connection in the existing literature
are easily first-order definable. Since it was shown in
\cite{Ol13} and \cite{Ol14} that asimulations are easily scalable
according to arbitrary first-order conditions imposed upon the
models, we will first concentrate on the minimal case without any
restrictions imposed and then accommodate for the possible
restrictions in a trivial way, by restricting the domain and the
counter-domain of asimulation relations accordingly.

As for the satisfaction clauses, employed in the existing
literature on Kripke-style semantics for intuitionistic modal
operators, the following variants of them seem to be the most
common and general:\footnote{In this form they are given, e.g. in
\cite[Section 4]{AlShk06}.}
\begin{align}
&M, s \models \Box I \Leftrightarrow \forall t(sR_\Box t
\Rightarrow
M, t \models I)\label{E:box1}\tag{\text{$\Box_1$}}\\
&M, s \models \Box I \Leftrightarrow \forall t(sRt \Rightarrow
\forall u(tR_\Box u \Rightarrow
M, u \models I))\label{E:box2}\tag{\text{$\Box_2$}}\\
&M, s \models \Diamond I \Leftrightarrow \exists t(sR_\Diamond t
\wedge
M, t \models I)\label{E:diam1}\tag{\text{$\Diamond_1$}}\\
&M, s \models \Diamond I \Leftrightarrow \forall t(sRt \Rightarrow
\forall u(tR_\Diamond u \wedge M, u \models
I))\label{E:diam2}\tag{\text{$\Diamond_2$}}
\end{align}
This gives us $4$ possible choices of satisfaction clauses. In
literature, these sets of satisfaction clauses are often viewed as
more or less explicit manifestations of one and the same set of
semantic intuitions. In this view, the reason why clauses
\eqref{E:box2} and \eqref{E:diam2} differ from \eqref{E:box1} and
\eqref{E:diam1}, respectively, is that the former clauses lift up
to the level of semantical definitions some desirable properties
that under \eqref{E:box1} and \eqref{E:diam1} are handled by
restrictions on the class of models.\footnote{E.g. the property of
monotonicity. Cf. the motivation for the clause \eqref{E:box2}
given in \cite[p.46]{Simp1994}.} However, in what follows, we will
disregard this circumstance and will simply consider these four
systems of clauses as \emph{bona fide} different systems. The
reason for this is that, like we said above, we find it convenient
in the context of treating modal intuitionistic formulas via
asimulations, to omit whatever restrictions on models that are
employed to equate these systems in the existing literature on the
subject.

Every of the $4$ semantical choices sketched above induces a
different standard translation of modal intuitionistic formulas
into classical FOL thus giving a different fragment of
it.\footnote{Every such standard translation is an obvious
extension of the well known notion of standard translation of
propositional intuitionistic formulas, see e.g. \cite[Definition
8.7]{Mints00}.} More precisely, for $i,j \in \{ 1,2 \}$ we will
denote the $(i,j)$-standard translation, or the standard
translation induced by adopting $(\Box_i)$-clause together with
$(\Diamond_j)$-clause above, by $ST_{ij}$.

Thus the inductive definitions of the $(i,j)$-standard
$x$-translations run as follows:
\begin{align*}
&ST_{ij}(p_n, x) = P_n(x);\\
&ST_{ij}(\bot, x) = \bot;\\
&ST_{ij}(I \wedge J, x) = ST_{ij}(I, x) \wedge
ST_{ij}(J, x);\\
&ST_{ij}(I \vee J, x) = ST_{ij}(I, x) \vee
ST_{ij}(J, x);\\
&ST_{ij}(I \to J, x) = \forall y(R(x, y) \to (ST_{ij}(I, y) \to
ST_{ij}(J,
y)));\\
&ST_{1j}(\Box I, x) = \forall y(R_\Box(x, y) \to ST_{1j}(I, y));\\
&ST_{2j}(\Box I, x) = \forall y(R(x, y) \to \forall z(R_\Box(y, z) \to ST_{2j}(I, z)));\\
&ST_{i1}(\Diamond I, x) = \exists y(R_\Diamond(x, y) \wedge
ST_{i1}(I, y));\\
&ST_{i2}(\Diamond I, x) = \forall y(R(x, y) \to \exists
z(R_\Diamond(y, z) \wedge ST_{i2}(I, z))).
\end{align*}

Standard conditions are imposed on the variables $x$, $y$, and
$z$.

\section{Characterization of modal intuitionistic formulas: definitions and main
results}\label{S:def}

Our aim in the present paper is to characterize the expressive
power of the four fragments of correspondence language induced by
the four above-mentioned versions of $(i,j)$-standard translation
of modal intuitionistic formulas via the suitable extension of the
notion of asimulation for the intuitionistic propositional logic.
We will begin by giving strict definitions of the four required
extensions, and then formulate the two versions of our main result
for all the four considered fragments of correspondence language
in one full sweep.

It is easy to see that if one chooses \eqref{E:diam1} as a
definition for the semantics of possibility, then the respective
standard translation of modal propositional formulas looks almost
as a notational variant of intuitionistic first-order logic. The
semantics of diamond then resembles the semantics of
intuitionistic existential quantifier, and the semantics of box,
if interpreted according to \eqref{E:box2}, resembles the
semantics of intuitionistic universal quantifier. The only
difference is that the binary relation of existence of object in a
state is replaced by binary relations $R_\Box$ and $R_\Diamond$
respectively. Also, note that now these `quantifiers' are based on
different binary relations rather than one and the same, but the
proofs given in the following section show that this little
wrinkle is of no consequence:

\begin{definition}\label{D:k-asim21}
{\em Let $(M_1, t)$, $(M_2, u)$ be two pointed $\Theta$-models. A
binary relation $A$ is called \emph{$(2,1)$-modal $\langle
(M_1,t), (M_2,u)\rangle_k$-asimulation} iff for any $i,j \in \{ 1,
2 \}$, any $\bar{a}_m, a,c \in U_i$, $\bar{b}_m, b, d \in U_j$,
any unary predicate letter $P \in \Theta$, the following
conditions hold:
\begin{align}
&A \subseteq \bigcup_{n > 0}((U_1^n \times U_2^n) \cup (U_2^n
\times U_1^n))\label{E:c1}\tag{\text{p-type}}\\
&t\mathrel{A}u\label{E:c2}\tag{\text{elem}}\\
&((\bar{a}_m, a)\mathrel{A}(\bar{b}_m, b) \wedge a \models_i P(x)) \Rightarrow b \models_j P(x)\label{E:c3}\tag{\text{p-base}}\\
&((\bar{a}_m, a)\mathrel{A}(\bar{b}_m, b) \wedge b\mathrel{R_j}d
\wedge m < k) \Rightarrow\notag\\
&\qquad\qquad\Rightarrow \exists c \in U_i(a\mathrel{R_i}c \wedge
(\bar{a}_m, a, c)\mathrel{\overset{\leftrightarrow}{A}}(\bar{b}_m,
b, d))\label{E:c4}\tag{\text{p-step}}\\
&((\bar{a}_m, a)\mathrel{A}(\bar{b}_m, b) \wedge b\mathrel{R_j}d
\wedge d\mathrel{R_{\Box j}}f
\wedge m + 1 < k) \Rightarrow\notag\\
&\qquad\qquad\Rightarrow \exists c,e \in U_i(a\mathrel{R_i}c
\wedge c\mathrel{R_{\Box i}}e \wedge (\bar{a}_m, a,
c,e)\mathrel{A}(\bar{b}_m,
b, d,f))\label{E:c5}\tag{\text{p-box-2}}\\
&((\bar{a}_m, a)\mathrel{A}(\bar{b}_m, b) \wedge
a\mathrel{R_{\Diamond i}}c
\wedge m < k) \Rightarrow\notag\\
&\qquad\qquad\Rightarrow \exists d \in U_j(b\mathrel{R_{\Diamond
j}}d \wedge (\bar{a}_m, a, c)\mathrel{A}(\bar{b}_m,
b,d))\label{E:c6}\tag{\text{p-diam-1}}
\end{align}
}
\end{definition}
\begin{definition}\label{D:asim21}
{\em Let $(M_1, t)$, $(M_2, u)$ be two pointed $\Theta$-models. A
binary relation $A$ is called \emph{$(2,1)$-modal $\langle
(M_1,t), (M_2,u)\rangle$-asimulation} iff for any $i,j \in \{ 1, 2
\}$, any $a,c \in U_i$, $b, d \in U_j$, any unary predicate letter
$P \in \Theta$ the following conditions hold:
\begin{align}
&A \subseteq (U_1 \times U_2) \cup (U_2\times
U_1)\label{E:c22}\tag{\text{type}}\\
&t\mathrel{A}u\label{E:c11}\tag{\text{elem}}\\
&(a\mathrel{A}b \wedge a \models_i P(x)) \Rightarrow b \models_j P(x))\label{E:c33}\tag{\text{base}}\\
&(a\mathrel{A}b \wedge b\mathrel{R_j}d) \Rightarrow \exists c \in
U_i(a\mathrel{R_i}c \wedge
c\mathrel{\overset{\leftrightarrow}{A}}d)\label{E:c44}\tag{\text{step}}\\
&(a\mathrel{A}b \wedge b\mathrel{R_j}d \wedge d\mathrel{R_{\Box
j}}f) \Rightarrow \exists
c,e \in U_i(a\mathrel{R_i}c \wedge c\mathrel{R_{\Box i}}e \wedge e\mathrel{A}f)\label{E:c55}\tag{\text{box-2}}\\
&(a\mathrel{A}b \wedge a\mathrel{R_{\Diamond i}}c) \Rightarrow
\exists d \in U_j(b\mathrel{R_{\Diamond j}}d \wedge
c\mathrel{A}d)\label{E:c66}\tag{\text{diam-1}}
\end{align}
}
\end{definition}

The situation changes very little when one adapts \eqref{E:box1}
instead of \eqref{E:box2} as the clause defining box. In fact,
this clause is the standard one from classical modal logic, and to
accommodate for this change one only needs to pick a suitable
asymmentric variant of the bisimulation clause. The resulting
definitions then look as follows:

\begin{definition}\label{D:k-asim11}
{\em Let $(M_1, t)$, $(M_2, u)$ be two pointed $\Theta$-models. A
binary relation $A$ is called \emph{$(1,1)$-modal $\langle
(M_1,t), (M_2,u)\rangle_k$-asimulation} iff for any $i,j \in \{ 1,
2 \}$, any $\bar{a}_m, a,c \in U_i$, $\bar{b}_m, b, d \in U_j$ any
unary predicate letter $P \in \Theta$, the conditions
\eqref{E:c1}, \eqref{E:c2}, \eqref{E:c3}, \eqref{E:c4},
\eqref{E:c6} are satisfied together with the following condition:
\begin{align}
&((\bar{a}_m, a)\mathrel{A}(\bar{b}_m, b) \wedge b\mathrel{R_j}d \wedge m + 1 < k) \Rightarrow\notag\\
&\qquad\qquad\Rightarrow \exists c \in U_i(a\mathrel{R_{\Box i}}c
\wedge (\bar{a}_m, a, c)\mathrel{A}(\bar{b}_m, b,
d))\label{E:cc5}\tag{\text{p-box-1}}
\end{align}
}
\end{definition}
\begin{definition}\label{D:asim11}
{\em Let $(M_1, t)$, $(M_2, u)$ be two pointed $\Theta$-models. A
binary relation $A$ is called \emph{$(1,1)$-modal $\langle
(M_1,t), (M_2,u)\rangle$-asimulation} iff for any $i,j \in \{ 1, 2
\}$, any $a,c \in U_i$, $b, d \in U_j$, any unary predicate letter
$P \in \Theta$ the conditions \eqref{E:c11}, \eqref{E:c22},
\eqref{E:c33}, \eqref{E:c44}, \eqref{E:c66} are satisfied together
with the following condition:
\begin{align}
&(a\mathrel{A}b \wedge b\mathrel{R_j}d) \Rightarrow \exists c \in
U_i(a\mathrel{R_{\Box i}}c \wedge
c\mathrel{A}d)\label{E:cc55}\tag{\text{box-1}}
\end{align}
}
\end{definition}
If, instead of using clause \eqref{E:diam1}, one chooses clause
\eqref{E:diam2}, things get somewhat more complicated. In order
get the right extensions of the basic asimulation notion, one has
to re-define asimulations as relation pairs rather than single
binary relations. As our first case we consider the set of
correspondence formulas induced by $(2,2)$-standard translations
of modal intuitionistic formulas:
\begin{definition}\label{D:k-asim22}
{\em Let $(M_1, t)$, $(M_2, u)$ be two pointed $\Theta$-models. An
ordered couple of binary relations $(A,B)$ is called
\emph{$(2,2)$-modal $\langle (M_1,t),
(M_2,u)\rangle_k$-asimulation} iff for any $i,j \in \{ 1, 2 \}$,
any $\bar{a}_m, a,c \in U_i$, $\bar{b}_m, b, d \in U_j$ any unary
predicate letter $P \in \Theta$, the conditions \eqref{E:c1},
\eqref{E:c2}, \eqref{E:c3}, \eqref{E:c4}, \eqref{E:c5} are
satisfied together with the following conditions:
\begin{align}
&B \subseteq \bigcup_{n > 0}((U_1^n \times U_2^n) \cup (U_2^n
\times U_1^n))\label{E:cc1}\tag{\text{p-B-type}}\\
&((\bar{a}_m, a)\mathrel{A}(\bar{b}_m, b) \wedge b\mathrel{R_j}d \wedge m + 1 < k) \Rightarrow\notag\\
&\qquad\qquad\Rightarrow \exists c \in U_i(a\mathrel{R_i}c \wedge
(\bar{a}_m, a, c)\mathrel{B}(\bar{b}_m, b, d))\label{E:cc6-1}\tag{\text{p-diam-2(1)}}\\
&((\bar{a}_m, a)\mathrel{B}(\bar{b}_m, b) \wedge a\mathrel{R_{\Diamond i}}c \wedge m < k) \Rightarrow\notag\\
&\qquad\qquad\Rightarrow \exists d \in U_j(b\mathrel{R_{\Diamond
j}}d \wedge (\bar{a}_m, a, c)\mathrel{A}(\bar{b}_m, b,
d))\label{E:cc6-2}\tag{\text{p-diam-2(2)}}
\end{align}
}
\end{definition}
\begin{definition}\label{D:asim22}
{\em Let $(M_1, t)$, $(M_2, u)$ be two pointed $\Theta$-models. An
ordered couple of binary relations $(A,B)$ is called
\emph{$(2,2)$-modal $\langle (M_1,t), (M_2,u)\rangle$-asimulation}
iff for any $i,j \in \{ 1, 2 \}$, any $a,c \in U_i$, $b, d \in
U_j$, any unary predicate letter $P \in \Theta$ the conditions
\eqref{E:c11}, \eqref{E:c22}, \eqref{E:c33}, \eqref{E:c44},
\eqref{E:c55} are satisfied together with the following
conditions:
\begin{align}
&B \subseteq (U_1 \times U_2) \cup (U_2\times
U_1)\label{E:cc11}\tag{\text{B-type}}\\
&(a\mathrel{A}b \wedge b\mathrel{R_j}d) \Rightarrow \exists c \in
U_i(a\mathrel{R_i}c \wedge
c\mathrel{B}d)\label{E:cc66-1}\tag{\text{diam-2(1)}}\\
&(a\mathrel{B}b \wedge a\mathrel{R_{\Diamond i}}c) \Rightarrow
\exists d \in U_j(b\mathrel{R_{\Diamond j}}d \wedge
c\mathrel{A}d)\label{E:cc66-2}\tag{\text{diam-2(2)}}
\end{align}
}
\end{definition}

The only case left is the one where one defines the satisfaction
relation by using \eqref{E:diam2} combined with \eqref{E:box1}.
The respective definitions simply re-shuffle the conditions
mentioned in the previous versions:

\begin{definition}\label{D:k-asim12}
{\em Let $(M_1, t)$, $(M_2, u)$ be two pointed $\Theta$-models. An
ordered couple of binary relation $(A,B)$ is called
\emph{$(1,2)$-modal $\langle (M_1,t),
(M_2,u)\rangle_k$-asimulation} iff for any $i,j \in \{ 1, 2 \}$,
any $\bar{a}_m, a \in U_i$, $\bar{b}_m, b, d \in U_j$ any unary
predicate letter $P \in \Theta$, the conditions \eqref{E:c1},
\eqref{E:cc1}, \eqref{E:c2}, \eqref{E:c3}, \eqref{E:c4},
\eqref{E:cc5}, \eqref{E:cc6-1}, and \eqref{E:cc6-2} are satisfied.
}
\end{definition}
\begin{definition}\label{D:asim12}
{\em Let $(M_1, t)$, $(M_2, u)$ be two pointed $\Theta$-models. An
ordered couple of binary relation $(A,B)$ is called
\emph{$(2,2)$-modal $\langle (M_1,t), (M_2,u)\rangle$-asimulation}
iff for any $i,j \in \{ 1, 2 \}$, any $a \in U_i$, $b, d \in U_j$,
any unary predicate letter $P \in \Theta$ the conditions
\eqref{E:c11}, \eqref{E:cc11}, \eqref{E:c22}, \eqref{E:c33},
\eqref{E:c44}, \eqref{E:cc55}, \eqref{E:cc66-1} and
\eqref{E:cc66-2} are satisfied.
}
\end{definition}

Before we reach the formulation of our main results, we still need
one more technical notion, that of invariance with respect to a
class of relations:

\begin{definition}\label{D:invariance}
{\em Let $\alpha$ be a class of relations such that for any $A \in
\alpha$ there is a $\Theta$ and there are $\Theta$-models $M_1$
and $M_2$ such that \eqref{E:c1} holds. Then a formula
$\varphi(x)$ is said to be \emph{invariant with respect to
$\alpha$}, iff for any $A \in \alpha$ for any corresponding
$\Theta$-models $M_1$ and $M_2$, and for any $a \in U_1$ and $b
\in U_2$ it is true that:
\[
(a\mathrel{A}b \wedge a \models_1 \varphi(x)) \Rightarrow b
\models_2 \varphi(x).
\]
}
\end{definition}
Of course, our primary examples of $\alpha$ will be the classes of
all $(i,j)$-modal $k$-asimulations for a given natural $k$ and the
classes of all $(i,j)$-modal asimulations. In case $j = 2$ we
identify the invariance with respect to $(A,B)$ with invariance
with respect to its left projection $A$.

It turns out that the properties of invariance with respect to
these relation classes can be used to characterize modal
intuitionistic fragment of FOL described by their respective
$ST_{ij}$. More precisely, one can obtain the following theorems:

\begin{theorem}\label{L:param}
Let $i, j \in \{ 1,2 \}$. A formula $\varphi(x)$ is equivalent to
an $(i,j)$-standard $x$-translation of a modal intuitionistic
formula iff there exists a $k \in \mathbb{N}$ such that
$\varphi(x)$ is invariant with respect to $(i,j)$-modal
$k$-asimulations.
\end{theorem}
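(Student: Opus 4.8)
The plan is to prove the two directions of the biconditional separately, following the classical template of the Van Benthem characterization theorem but adapted to the asymmetric, parameterized setting of asimulations.

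The plan is to prove the two implications of the biconditional separately, following the template of the Van Benthem characterization theorem adapted to the asymmetric, degree-bounded setting of asimulations.

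For the direction from standard translations to invariance, suppose $\varphi(x)$ is equivalent to $ST_{ij}(I,x)$ for a modal intuitionistic formula $I$. I would take $k$ to be the modal depth of $I$ (counting nested implications and modalities, plus a fixed constant absorbing the quantifier overhead of the $(i,j)$-clauses) and prove, by induction on the construction of $I$, the strengthened statement: whenever $A$ is an $(i,j)$-modal $k$-asimulation for the relevant pointed models and $(\bar{a}_m, a)\mathrel{A}(\bar{b}_m, b)$ with enough budget $k-m$ left to accommodate the modal depth of $I$, then $a \models_i ST_{ij}(I,x)$ implies $b \models_j ST_{ij}(I,x)$. The atomic case is exactly \eqref{E:c3}; conjunction and disjunction are immediate; for implication I would invoke \eqref{E:c4}, crucially using the \emph{bidirectional} clause $\overset{\leftrightarrow}{A}$: given a witness $c$ with $a\mathrel{R_i}c$ and $c\mathrel{\overset{\leftrightarrow}{A}}d$, the backward part $d\mathrel{A}c$ transports the antecedent from $d$ to $c$, and the forward part $c\mathrel{A}d$ transports the consequent back, matching the persistence built into $ST_{ij}(I\to J, x)$. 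The modal cases use the corresponding step clause --- \eqref{E:c5} for $\Box$ when $i=2$ and \eqref{E:cc5} when $i=1$, and \eqref{E:c6} for $\Diamond$ when $j=1$ or the pair \eqref{E:cc6-1}, \eqref{E:cc6-2} routed through $B$ when $j=2$ --- with the budget guards $m<k$ and $m+1<k$ exactly matching the drop in depth. Specializing to $m=0$ and the length-one tuples guaranteed by \eqref{E:c2} yields invariance in the sense of Definition \ref{D:invariance}.

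For the converse --- the substantial direction --- fix a $k$ witnessing invariance and let $d = d(k)$ be the corresponding depth bound. I would collect the set $\Gamma(x)$ of all $ST_{ij}(I,x)$ with modal depth of $I$ at most $d$ that are logical consequences of $\varphi(x)$. The heart of the argument is the claim that $\Gamma(x) \models \varphi(x)$. To prove it, take $(N,b)\models\Gamma(x)$; I would first produce a model $(M,a)\models\varphi(x)$ whose bounded-depth modal theory is included in that of $(N,b)$. The key is that this inclusion can be secured by a compactness argument in which the role played by negation in the classical Van Benthem proof is taken over by \emph{disjunction}: if no such $(M,a)$ existed, then $\varphi(x) \models ST_{ij}(I_1,x)\vee\dots\vee ST_{ij}(I_n,x)$ for finitely many bounded-depth modal formulas refuted at $(N,b)$; but since the standard translation commutes with disjunction, this exhibits $ST_{ij}(I_1\vee\dots\vee I_n, x)$ as a member of $\Gamma(x)$, forcing $(N,b)$ to satisfy one of the disjuncts, a contradiction. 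Passing to $\omega$-saturated elementary extensions of $(M,a)$ and $(N,b)$, I would then build an $(i,j)$-modal $k$-asimulation relating the two points, defining the relation (and, when $j=2$, its companion $B$) so that related tuples carry the appropriate inclusion of bounded-depth modal theories; invariance under $k$-asimulations then transports $\varphi$ from $a$ to $b$, giving $(N,b)\models\varphi(x)$. Finally, since over the finite vocabulary $\Sigma_\varphi$ there are only finitely many inequivalent modal formulas of depth at most $d$, compactness collapses $\Gamma(x)$ to a single conjunction $ST_{ij}(I^*,x)$ equivalent to $\varphi(x)$.

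The main obstacle is the construction of the $k$-asimulation from saturation in the converse direction. Verifying the step clause \eqref{E:c4}, the box clauses \eqref{E:c5} and \eqref{E:cc5}, and especially the diamond clauses --- where for $j=2$ one must thread witnesses through the auxiliary relation $B$ via \eqref{E:cc6-1} and \eqref{E:cc6-2} --- requires that every finitely approximable existential demand on one model be realized in the other, which is exactly what $\omega$-saturation supplies. The real care lies in the bookkeeping of the degree parameter, keeping the remaining budget $k-m$ synchronized with the depth of the modal formulas certifying each step, and in handling the four separate $(i,j)$ regimes within one uniform back-and-forth construction.
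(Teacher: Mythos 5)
Your easy direction is essentially the paper's: an induction on $I$ proving a budget-relativized preservation statement (the paper's Lemma \ref{L:asim}), then specialization to $m=0$ via \eqref{E:c2} (Corollary \ref{L:c-k-inv}); your handling of implication through the bidirectional clause in \eqref{E:c4} is exactly right. Your hard direction, however, takes a genuinely different route. The paper (Lemma \ref{L:t1}) argues by contradiction through a dichotomy on complete $(\varphi,x,k+2)$-conjunctions, and its final assembly of Theorem \ref{L:param} needs a padding step (prefixing dummy quantifiers) to reconcile the invariance parameter $k$ with $r(\varphi)$. You instead run the consequence-set strategy that the paper reserves for the unbounded Theorem \ref{L:main} (Lemma \ref{L:hard}): collect the bounded-depth modal consequences $\Gamma(x)$, prove $\Gamma(x)\models\varphi(x)$ via the disjunction-for-negation compactness trick, and collapse $\Gamma(x)$ to a single translation by finiteness. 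That reorganization is legitimate and even avoids both the dichotomy and the padding. But note that your appeal to $\omega$-saturation is redundant in the bounded setting: by Lemma \ref{L:fin} every bounded-degree type has finitely many representatives up to equivalence, so each "existential demand" is captured by a single formula; this is precisely how the paper's Lemma \ref{L:t1} dispenses with saturation, which is needed only for Theorem \ref{L:main}.

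There is one genuine gap: you never specify the invariant carried by the companion relation $B$, and "the appropriate inclusion of bounded-depth modal theories" conceals the crux of the $j=2$ cases. A plain type inclusion cannot serve as $B$: condition \eqref{E:cc6-2} demands that from $(\bar{a}_m,a)\mathrel{B}(\bar{b}_m,b)$ and $a\mathrel{R_{\Diamond i}}c$ one produce $d$ with $b\mathrel{R_{\Diamond j}}d$ and the extended tuples related by $A$; but under the \eqref{E:diam2} semantics the truth of $\Diamond$-formulas at $a$ constrains only the $R_\Diamond$-successors of its $R$-successors, never those of $a$ itself, so no inclusion between the theories of $a$ and $b$ gives any purchase on their own $R_\Diamond$-successors. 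What is needed is the paper's $imp$-sets: $imp(a)$ collects the (translations of) bounded-depth modal formulas refuted at \emph{every} $R_\Diamond$-successor of $a$, and $B$ must carry the \emph{reversed} inclusion $imp(b)\subseteq imp(a)$. With that invariant, \eqref{E:cc6-1} is verified by taking finite representatives $J_1,\dots,J_q$ of the relevant $imp$-set and observing that $ST(\Diamond(J_1\vee\dots\vee J_q),x)$ must fail on the $A$-side, while \eqref{E:cc6-2} is verified by taking finite representatives $I_1,\dots,I_p$ of the successor's type and observing that $ST(I_1\wedge\dots\wedge I_p,x)$ cannot lie in the relevant $imp$-set. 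Saturation, which you offer as the tool overcoming this "main obstacle", does not supply any of this: it realizes types only after their finite satisfiability is established, and both the choice of invariant and the finite-satisfiability arguments come from the $imp$-set definition together with these boxed-disjunction and diamond-conjunction manoeuvres. So your architecture is sound, but it is missing the one idea that makes the $j=2$ instantiations --- the cases the paper itself singles out as the hard ones --- go through.
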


\begin{theorem}\label{L:main}
Let $i, j \in \{ 1,2 \}$. A formula $\varphi(x)$ is equivalent to
an $(i,j)$-standard $x$-translation of an intuitionistic formula
iff $\varphi(x)$ is invariant with respect to $(i,j)$-modal
asimulations.
\end{theorem}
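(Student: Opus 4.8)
The plan is to derive Theorem \ref{L:main} from Theorem \ref{L:param}, which already provides the parametrized characterization in terms of $k$-asimulations. The two theorems have identical statements except that Theorem \ref{L:main} uses unbounded asimulations in place of the $k$-asimulations quantified by an existential $k$. Accordingly, the whole task reduces to comparing invariance with respect to $(i,j)$-modal asimulations against invariance with respect to $(i,j)$-modal $k$-asimulations for some $k$.

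First I would establish the easy direction. If $\varphi(x)$ is equivalent to $ST_{ij}(I,x)$ for some modal intuitionistic formula $I$, then by Theorem \ref{L:param} there is a $k$ with respect to which $\varphi(x)$ is invariant, and the natural choice is $k = r(ST_{ij}(I,x))$ or the modal depth of $I$. To pass from $k$-invariance to full invariance, I would show that every (unbounded) $(i,j)$-modal asimulation $A$ (or pair $(A,B)$) restricts, in a suitable sense, to a $(i,j)$-modal $k$-asimulation. Inspecting the definitions, the $k$-asimulation clauses \eqref{E:c4}, \eqref{E:c5}, \eqref{E:c6}, etc.\ are exactly the unbounded clauses \eqref{E:c44}, \eqref{E:c55}, \eqref{E:c66} guarded by length restrictions such as $m < k$ or $m+1 < k$; thus an unbounded asimulation satisfies the $k$-asimulation clauses outright once we truncate the relation to tuples of length at most $k$. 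The truncation $A_k = A \cap \bigcup_{0 < n \leq k}(\cdots)$ manifestly inherits \eqref{E:c1}, \eqref{E:c2}, \eqref{E:c3}, and the guarded step clauses, so $A_k$ is a $(i,j)$-modal $k$-asimulation; since $t\mathrel{A}u$ with single elements, invariance transfers and we conclude $b \models_2 \varphi(x)$.

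For the converse I would run the argument in the opposite direction. Suppose $\varphi(x)$ is invariant with respect to $(i,j)$-modal asimulations. The aim is to produce a $k$ such that $\varphi(x)$ is invariant with respect to $(i,j)$-modal $k$-asimulations, whence Theorem \ref{L:param} yields the desired equivalence. Here I would argue contrapositively or by a direct embedding: given any $(i,j)$-modal $k$-asimulation witnessing a potential failure, I would attempt to extend it to an unbounded $(i,j)$-modal asimulation. The cleanest route is to show that for a suitable single value of $k$ (for instance, one large enough relative to the degree $r(\varphi)$, exploiting that $\varphi$ has a fixed finite quantifier depth), any unbounded asimulation already supplies everything a $k$-asimulation needs and vice versa on the relevant initial segment. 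In effect I want the relation class of all $(i,j)$-modal $k$-asimulations and the class of all $(i,j)$-modal asimulations to induce the same invariant formulas of bounded degree, which follows once the length parameter $m$ never exceeds $k$ along any chain of step-clause applications relevant to evaluating a degree-$k$ formula.

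The hard part will be the converse, and specifically the bookkeeping that ties the tuple-length parameter $m$ in the $k$-asimulation clauses to the quantifier degree of $\varphi$. One must verify that invariance with respect to the unbounded asimulations genuinely forces invariance at some finite level $k$, rather than only a diagonal statement over all $k$ simultaneously; the subtlety is that Theorem \ref{L:param} demands a single witnessing $k$. I expect this to be handled by taking $k = r(\varphi) + 1$ (or a comparable bound) and checking that the guards $m < k$ and $m + 1 < k$ in \eqref{E:c4}--\eqref{E:c6} and \eqref{E:cc5}--\eqref{E:cc6-2} are never activated beyond what a formula of degree $r(\varphi)$ can probe, so that the restricted and unrestricted notions coincide on the fragment of interest. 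Establishing this alignment between the combinatorial growth of the tuples and the nesting of quantifiers in $\varphi$ is where the real care is required; the remaining verifications are routine inductions already implicit in the proof of Theorem \ref{L:param}.
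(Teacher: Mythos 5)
Your overall plan --- to read Theorem \ref{L:main} off from Theorem \ref{L:param} by comparing invariance under unbounded asimulations with invariance under $k$-asimulations --- works for one direction only, and even there the construction you describe is backwards. An unbounded asimulation $A$ relates \emph{single elements} only (condition \eqref{E:c22}), so your truncation $A_k = A \cap \bigcup_{0<n\leq k}(\cdots)$ is just $A$ itself, and $A$ by itself is \emph{not} a $k$-asimulation: the guarded clause \eqref{E:c4} demands that the lengthened tuples $(\bar{a}_m,a,c)$ and $(\bar{b}_m,b,d)$ again stand in the relation, which is impossible for a relation containing only pairs of $1$-tuples (take $m=0<k$ and any $d$ with $b\mathrel{R_j}d$). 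What is needed is the opposite of truncation: pad $A$ up to tuples, declaring $(\bar{a}_m,a)$ related to $(\bar{b}_m,b)$ whenever $a\mathrel{A}b$, with arbitrary prefixes; the padded relation is a $k$-asimulation for \emph{every} $k$, and this is exactly how the paper proves Corollary \ref{L:c-inv}. So the ``only if'' half is fixable and then coincides with the paper's argument.

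The genuine gap is in the converse. You need the implication ``invariant under unbounded asimulations $\Rightarrow$ invariant under $k$-asimulations for some concrete $k$,'' and your two suggestions for it both fail. Extending a $k$-asimulation that witnesses a failure of $k$-invariance to an unbounded asimulation is impossible in general: because the clauses of a $k$-asimulation are guarded by $m<k$ and $m+1<k$, there are far \emph{more} $k$-asimulations than unbounded ones, and a $k$-asimulation can relate pointed models that no unbounded asimulation relates (exactly as $n$-bisimilar points need not be bisimilar). For the same reason, invariance under $k$-asimulations is the strictly \emph{stronger} property, so it cannot be extracted from unbounded invariance by bookkeeping about which guards ``a formula of degree $r(\varphi)$ can probe'': the issue is not how deep $\varphi$ sees along a given relation, but that the class of relations being quantified over grows when the guards are imposed. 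The implication you want is true, but only as a corollary of the full theorem, and the paper does not (and cannot) obtain it combinatorially. Instead it proves the hard direction outright: assuming invariance under unbounded asimulations, it shows $ic(\varphi(x)) \models \varphi(x)$ by passing to $\omega$-saturated elementary extensions (Lemma \ref{L:ext}), on which full type inclusion \emph{does} satisfy the unguarded clauses and yields an unbounded asimulation (Key Lemma $3$, Lemma \ref{L:sat}), and then concludes by compactness (Lemma \ref{L:hard}). None of this is ``implicit in the proof of Theorem \ref{L:param}'': in the bounded setting the role of saturation is played by Lemma \ref{L:fin} (only finitely many formulas of bounded degree up to equivalence), and that substitute evaporates once the degree bound is dropped. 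Your reduction therefore collapses exactly at the step where the real model-theoretic work of the theorem lies.
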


In what follows, we will also need to speak of \emph{basic
asimulations}. Using the notation of Definitions \ref{D:k-asim21}
and \ref{D:asim21}, a basic $\langle (M_1,t),
(M_2,u)\rangle_k$-asimulation (resp. a basic $\langle (M_1,t),
(M_2,u)\rangle$-asimulation) is a binary relation satisfying all
the conditions of Definition \ref{D:k-asim21} (resp. Definition
\ref{D:asim21}) but the last two.

\section{Characterization of modal intuitionistic formulas: the main case}\label{S:proof}

Theorems \ref{L:param} and \ref{L:main} admit of four different
instantiations of of $i$ and $j$. The most difficult ones seem to
be the two instantiations with $j = 2$ since with them the
situation bears the least degree of analogy to the first-order
intuitionistic logic. Therefore, in the present section consider
in some detail the case $i = j = 2$, whereas in the next one we
show how to adapt our proofs to the other cases.

In formulating our lemmas and presenting our proofs we will mimic
the structure of the proofs given in \cite{Ol13} and \cite{Ol14}
and will sometimes refer the reader to their respective parts in
cases concerning basic asimulations and their properties.

\subsection{Proof of Theorem \ref{L:param}}

\begin{lemma}[Key Lemma $1$]\label{L:asim}
Let $\varphi(x) = ST_{22}(I, x)$ for some modal intuitionistic
formula $I$, and let $r(\varphi) = k$. Let $\Sigma_\varphi
\subseteq \Theta$, let $(M_1, t)$, $(M_2, u)$ be two pointed
$\Theta$-models, and let $(A, B)$ be

\noindent a $(2,2)$-modal $\langle (M_1,t), (M_2,
u)\rangle_l$-asimulation. Then
\begin{align*}
((\bar{a}_m, a)\mathrel{A}(\bar{b}_m, b) \wedge m + k \leq l
\wedge a \models_i \varphi(x)) \Rightarrow b \models_j \varphi(x),
\end{align*}
for all $i,j \in \{ 1, 2 \}$, $(\bar{a}_m, a) \in U_i^{m + 1}$,
 and $(\bar{b}_m, b) \in U_j^{m + 1}$.
\end{lemma}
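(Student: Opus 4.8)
The plan is to argue by induction on the structure of the modal intuitionistic formula $I$, treating the degree bound $m + k \leq l$ as a budget that simultaneously guarantees the numerical side-conditions of the asimulation clauses of Definition \ref{D:k-asim22} and the applicability of the induction hypothesis to the immediate subformulas. Throughout I use that the definition quantifies symmetrically over $i, j \in \{1,2\}$, so the hypothesis may be invoked with the two pointed models interchanged. For the base cases: if $I = p_n$ then $\varphi(x) = P_n(x)$ and $k = 0$, and the conclusion is exactly \eqref{E:c3}; if $I = \bot$ the antecedent $a \models_i \bot$ is false, so the implication holds vacuously. The cases $I = J_1 \wedge J_2$ and $I = J_1 \vee J_2$ are routine, since each subtranslation has degree $\leq k$, so the hypothesis $m + k \leq l$ transfers verbatim and one applies the induction hypothesis to both conjuncts, resp.\ to the satisfied disjunct.

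The implication case is the first genuinely delicate one. Here $\varphi(x) = \forall y(R(x,y) \to (\psi_1(y) \to \psi_2(y)))$ with $\psi_s = ST_{22}(J_s, x)$ and $\max(r(\psi_1), r(\psi_2)) = k - 1$. To show $b \models_j \varphi$, I fix $d$ with $b \mathrel{R_j} d$ and assume $d \models_j \psi_1$; since $k \geq 1$ forces $m < l$, clause \eqref{E:c4} yields a $c \in U_i$ with $a \mathrel{R_i} c$ and $(\bar{a}_m, a, c) \mathrel{\overset{\leftrightarrow}{A}} (\bar{b}_m, b, d)$. The new tuples have length $m + 2$, and $(m+1) + r(\psi_s) \leq (m+1) + (k-1) = m + k \leq l$, so the induction hypothesis applies to both subformulas at this length. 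Applying it backwards (with $i$ and $j$ swapped) along $(\bar{b}_m, b, d) \mathrel{A} (\bar{a}_m, a, c)$ converts $d \models_j \psi_1$ into $c \models_i \psi_1$; since $a \models_i \varphi$ and $a \mathrel{R_i} c$ give $c \models_i \psi_1 \Rightarrow c \models_i \psi_2$, I obtain $c \models_i \psi_2$; and the forward direction along $(\bar{a}_m, a, c) \mathrel{A} (\bar{b}_m, b, d)$ delivers $d \models_j \psi_2$, as required.

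The two modal cases are covariant and so need the induction hypothesis only in the forward direction. For $\Box$, where $\varphi(x) = \forall y(R(x,y) \to \forall z(R_\Box(y,z) \to \psi(z)))$ with $r(\psi) = k - 2$, I fix $d, f$ with $b \mathrel{R_j} d$ and $d \mathrel{R_{\Box j}} f$; since $k \geq 2$ gives $m + 1 < l$, clause \eqref{E:c5} yields $c, e \in U_i$ with $a \mathrel{R_i} c$, $c \mathrel{R_{\Box i}} e$ and $(\bar{a}_m, a, c, e) \mathrel{A} (\bar{b}_m, b, d, f)$; from $a \models_i \varphi$ I read off $e \models_i \psi$, and $(m+2) + (k-2) = m + k \leq l$ lets the induction hypothesis give $f \models_j \psi$. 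For $\Diamond$, where $\varphi(x) = \forall y(R(x,y) \to \exists z(R_\Diamond(y,z) \wedge \psi(z)))$ with $r(\psi) = k - 2$, I fix $d$ with $b \mathrel{R_j} d$; clause \eqref{E:cc6-1} (applicable as $m + 1 < l$) yields $c \in U_i$ with $a \mathrel{R_i} c$ and $(\bar{a}_m, a, c) \mathrel{B} (\bar{b}_m, b, d)$. From $a \models_i \varphi$ and $a \mathrel{R_i} c$ there is a $g$ with $c \mathrel{R_{\Diamond i}} g$ and $g \models_i \psi$; feeding this $B$-pair into clause \eqref{E:cc6-2} produces $f \in U_j$ with $d \mathrel{R_{\Diamond j}} f$ and $(\bar{a}_m, a, c, g) \mathrel{A} (\bar{b}_m, b, d, f)$, and once more $(m+2) + (k-2) = m + k \leq l$ turns $g \models_i \psi$ into $f \models_j \psi$.

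I expect the main obstacle to be the implication case, where the contravariant antecedent forces the use of the symmetric step clause \eqref{E:c4} together with an application of the induction hypothesis with the two models interchanged; keeping the direction of the $A$-relation and the assignment of $i, j$ consistent is the principal bookkeeping difficulty. A secondary point of care is the $\Diamond$-case, which must be routed through the auxiliary relation $B$ via \eqref{E:cc6-1} and \eqref{E:cc6-2} before returning to $A$. In every modal or implicational step one must also re-verify that the budget $m + k \leq l$ still covers the lengthened tuples; this is exactly why the statement is phrased with the free parameter $m$ and the global bound $l$ rather than fixing $m = 0$.
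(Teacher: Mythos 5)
Your proof is correct and takes essentially the same route as the paper's: induction on the structure of $I$, with the budget $m + k \leq l$ tracked exactly as in the paper and the modal cases handled by \eqref{E:c5}, \eqref{E:cc6-1} and \eqref{E:cc6-2} precisely as in the paper's Cases 1 and 2. The only difference is that you spell out the propositional cases (in particular the two-directional use of \eqref{E:c4} for implication), which the paper discharges by citing Lemma 1 of \cite{Ol13}.
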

\begin{proof}
We proceed by induction on the complexity of $I$. In what follows
we will abbreviate the induction hypothesis by IH.

First we note, that since $(A, B)$ is a $(2,2)$-modal $\langle
(M_1,t), (M_2, u)\rangle_l$-asimulation, then $A$ is a basic
$\langle (M_1,t), (M_2, u)\rangle_l$-asimulation, which means that
we can re-use Lemma $1$ of \cite{Ol13} in order to handle the
basis and the induction steps for $\bot$, $\wedge$, $\vee$, and
$\to$. There remain the two cases which involve the modal
operators:

\emph{Case 1}. Let $I = \Box J$. Then
\[
\varphi(x) = \forall y(R(x, y) \to \forall z(R_\Box(y,z) \to
ST_{22}(J, z))).
\]

Assume that:

\begin{align}
&a \models_i \forall y(R(x, y) \to \forall z(R_\Box(y,z) \to
ST_{22}(J, z)))\label{E:1l0}\\
&(\bar{a}_m, a)\mathrel{A}(\bar{b}_m, b)\label{E:1l1}\\
&m + r(\varphi(x)) \leq l\label{E:1l2}
\end{align}
Moreover, it follows from definition of $r$ that:
\begin{align}
&r(\varphi(x)) \geq 2 \label{E:1l3}\\
&r(ST(J, z)) \leq r(\varphi(x)) - 2\label{E:1l4}
\end{align}
Now, consider arbitrary $d,f \in U_j$ such that $b\mathrel{R_j}d$
and $d\mathrel{R_{\Box j}}f$. Since \eqref{E:1l2} and
\eqref{E:1l3} clearly imply that $m + 1 < l$, it follows from
\eqref{E:1l1} and \eqref{E:c5} that one can choose $c,e \in U_i$,
such that:
\begin{align}
&a\mathrel{R_i}c\label{E:1l5}\\
&c\mathrel{R_{\Box i}}e\label{E:1l6}\\
&(\bar{a}_m, a,c,e)\mathrel{A}(\bar{b}_m, b,d,f)\label{E:1l7}
\end{align}
So, we reason as follows:
\begin{align}
&e \models_i ST_{22}(J, z)\label{E:1l8} &&\text{(from
\eqref{E:1l0}, \eqref{E:1l5}, and \eqref{E:1l6})}\\
&m + 2 + r(ST_{22}(J, z)) \leq l\label{E:1l9} &&\text{(from
\eqref{E:1l2} and \eqref{E:1l4})}\\
&f \models_j ST_{22}(J, z)\label{E:1l10} &&\text{(from
\eqref{E:1l7}, \eqref{E:1l8}, \eqref{E:1l9} by IH)}
\end{align}
Since $d$ was chosen to be an arbitrary $R_j$-successor of $b$,
and $f$ an arbitrary $R_{\Box j}$-successor of $d$, this means
that
\[
b \models_j \forall y(R(x, y) \to \forall z(R_\Box(y,z) \to
ST_{22}(J, z))),
\]
and we are done.

\emph{Case 2}. Let $I = \Diamond J$. Then

\[
\varphi(x) = \forall y(R(x, y) \to \exists z(R_\Diamond(y,z)
\wedge ST_{22}(J, z))).
\]

Assume that:

\begin{align}
&a \models_i \forall y(R(x, y) \to \exists z(R_\Diamond(y,z)
\wedge ST_{22}(J, z)))\label{E:2l0}\\
&(\bar{a}_m, a)\mathrel{A}(\bar{b}_m, b)\label{E:2l1}\\
&m + r(\varphi(x)) \leq l\label{E:2l2}
\end{align}
Moreover, it follows from definition of $r$ that:
\begin{align}
&r(\varphi(x)) \geq 2 \label{E:2l3}\\
&r(ST(J, y)) \leq r(\varphi(x)) - 2\label{E:2l4}
\end{align}
Since \eqref{E:2l2} and \eqref{E:2l3} clearly imply that $m + 1 <
l$, it follows from \eqref{E:2l1} and \eqref{E:cc6-1} that one can
choose a $c \in U_i$, such that:
\begin{align}
&a\mathrel{R_i}c\label{E:2l5}\\
&(\bar{a}_m, a, c)\mathrel{B}(\bar{b}_m, b, d)\label{E:2l6}
\end{align}
Now from \eqref{E:2l0} and \eqref{E:2l5} it follows that we can
choose an $e \in U_i$ such that
\begin{align}
&c\mathrel{R_{\Diamond i}}e\label{E:2l7}\\
&e \models_i ST_{22}(J, z)\label{E:2l8}
\end{align}
Also, by \eqref{E:2l6}, condition \eqref{E:cc6-2}, and the fact
that $m + 1 < l$, we have:
\begin{align}
&\exists f \in U_j(d\mathrel{R_{\Diamond j}}f \wedge (\bar{a}_m,
a, c, e)\mathrel{A}(\bar{b}_m, b, d, f))\label{E:2l9}
\end{align}
We further get that:
\begin{align}
&m + 2 + r(ST_{22}(J, z)) \leq l\label{E:2l10} &&\text{(from
\eqref{E:2l2} and \eqref{E:2l4})}\\
&\exists f \in U_j(d\mathrel{R_{\Diamond i}}f \wedge (f \models_j
ST_{22}(J, z)))\label{E:2l11}  &&\text{(by IH from \eqref{E:2l8},
\eqref{E:2l9}, and \eqref{E:2l10})}
\end{align}
Since $d$ was chosen to be an arbitrary $R_j$-successor of $b$,
this means that
\[
b \models_j \forall y(R(x, y) \to \exists z(R_\Diamond(y,z) \wedge
ST_{22}(J, z))),
\]
and we are done.
\end{proof}
We use Lemma \ref{L:asim} to derive the `easy' direction of
$(2,2)$-instantiation of Theorem \ref{L:param}:

\begin{corollary}\label{L:c-k-inv}
If $\varphi(x)$ is equivalent to a $(2,2)$-standard
$x$-translation of an intuitionistic formula, then there exists $k
\in \mathbb{N}$ such that $\varphi(x)$ is invariant with respect
to $(2,2)$-modal $k$-asimulations.
\end{corollary}
\begin{proof}
Let $\varphi(x)$ be logically equivalent to $ST_{22}(I,x)$ for
some intuitionistic formula $I$, and let $r(ST_{22}(I,x)) = k$.
Then it follows from Lemma \ref{L:asim} (setting $i: = 1$, $j :=
2$, $m := 0$, and $l := k$) that $ST_{22}(I,x)$ is invariant with
respect to $(2,2)$-modal $k$-asimulations, and so is $\varphi(x)$.
\end{proof}

On our way to the inverse direction of $(2,2)$-instantiation of
Theorem \ref{L:param} we first need a new piece of notation. For a
formula $\varphi(x)$ in the correspondence language, variable $x$
and a natural $l$, we denote with $int(\varphi, x, l)$ the set of
all $(2,2)$-standard $x$-translations of intuitionistic formulas,
which happen to be $(\Sigma_\varphi, x, l)$-formulas. We use this
notation to define three types of formulas which are important
components in the proofs to follow. Let $\Sigma_\varphi \subseteq
\Theta$, let $M$ be a $\Theta$-model and let $a \in U$. Then:
$$
tp_l(\varphi(x), M,a) = \{ \psi(x) \in int(\varphi,x,l)\mid M, a
\models \psi(x) \},
$$
$$
\overline{tp}_l(\varphi(x), M,a) = \{ \psi(x) \in
int(\varphi,x,l)\mid M,a \not\models \psi(x) \},
$$
and, further:
$$
imp_l(\varphi(x), M,g) = \bigcap\{ \overline{tp}_l(\varphi(x),M,b)
\mid \iota(R_{\Diamond})(a, b) \}
$$

We mention the following obvious link between different pieces of
notation just defined. Under the above-mentioned assumptions about
$\varphi(x)$, $l$, $\Theta$, $M$, $a$, and for any $M$ be a
$\Theta$-model $M'$ and $a' \in U'$ we have:
$$
tp_l(\varphi(x), M,a) \subseteq tp_l(\varphi(x), M',a')
\Leftrightarrow \overline{tp}_l(\varphi(x), M',a') \subseteq
\overline{tp}_l(\varphi(x), M,a).
$$

We then invoke the following well known-fact about classical
first-order logic:

\begin{lemma}\label{L:fin}
For any finite predicate vocabulary $\Theta$, any variable $x$ and
any natural $k$ there are, up to logical equivalence, only
finitely many $(\Theta, x, k)$-formulas.
\end{lemma}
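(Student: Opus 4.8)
The plan is to prove a slightly stronger statement by induction on $k$, namely that for every finite $\Theta$ and every \emph{finite set} of variables $W$ there are, up to logical equivalence, only finitely many $\Theta$-formulas whose free variables all lie in $W$ and whose degree is at most $k$; Lemma~\ref{L:fin} is then the special case $W = \{x\}$. Strengthening the claim to an arbitrary finite $W$ is essential, because stripping a quantifier from a formula with free variable $x$ produces a matrix with one extra free variable, so the induction cannot be run with a single free variable held fixed throughout.

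For the base case $k = 0$ the relevant formulas are quantifier-free, hence Boolean combinations (built with $\bot, \wedge, \vee, \to$) of atomic $\Theta$-formulas whose arguments are drawn from $W$. Since $\Theta$ and $W$ are finite and the language has no identity, there are only finitely many such atoms, say $n$ of them; and a Boolean combination is determined up to logical equivalence by the set of truth-value assignments to these $n$ atoms under which it is true, giving at most $2^{2^n}$ equivalence classes. For the inductive step I would first observe that any formula of degree at most $k+1$ with free variables in $W$ is, by inspection of its parse tree, a Boolean combination of atomic formulas (with arguments in $W$) together with its \emph{topmost} quantified subformulas, i.e. subformulas of the form $\forall y\,\psi$ or $\exists y\,\psi$ lying under no other quantifier; the definition of $r$ forces $r(\psi) \leq k$ for each of these, and the free variables of $\psi$ lie in $W \cup \{y\}$.

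The \textbf{main obstacle} is that the bound variable $y$ ranges over the whole infinite supply of variables, so naively the stock of building blocks is infinite. This is resolved by renaming bound variables: fixing one variable $y_0 \notin W$, every topmost quantified subformula $\exists y\,\psi$ or $\forall y\,\psi$ is logically equivalent to $\exists y_0\,\psi'$, respectively $\forall y_0\,\psi'$, where $\psi'$ is obtained by renaming $y$ to $y_0$, has degree at most $k$, and has free variables in the finite set $W \cup \{y_0\}$. By the induction hypothesis applied to $W \cup \{y_0\}$ there are only finitely many such $\psi'$ up to equivalence, hence only finitely many formulas $\exists y_0\,\psi'$ and $\forall y_0\,\psi'$ up to equivalence. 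Thus any formula of degree at most $k+1$ is equivalent to a Boolean combination of a fixed finite stock of building blocks (the atoms over $W$ together with these finitely many quantified formulas), and the same counting argument as in the base case bounds the number of equivalence classes. This closes the induction, and taking $W = \{x\}$ gives Lemma~\ref{L:fin}.
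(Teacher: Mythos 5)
Your proof is correct, but note that the paper does not actually prove Lemma~\ref{L:fin}: it cites it as Lemma~3.4 of \cite{EFT84}, and your induction on $k$ with a finite set $W$ of free variables, decomposing a formula into a Boolean combination of atoms and topmost quantified subformulas and renaming bound variables to a fixed fresh $y_0$, is essentially the standard argument given in that reference. The only detail to flag is that the renaming of $y$ to $y_0$ must avoid capture in case $y_0$ already occurs bound inside $\psi$; this is handled by a preliminary $\alpha$-renaming of the inner bound variables, which preserves logical equivalence, free variables and degree, so your argument goes through unchanged.
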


This fact is proved as Lemma 3.4 in \cite[pp. 189--190]{EFT84}. It
implies that for every set of formulas, which have one of the
forms $int(\varphi, x, l)$, $tp_l(\varphi(x), M,a)$,
$\overline{tp}_l(\varphi(x), M,a)$, $imp_l(\varphi(x), M,a)$ there
exist a finite subset collecting the logical equivalents for all
the formulas in the set. Moreover, it allows us to collect logical
equivalents of all $(2,2)$-standard translations of intuitionistic
formulas which are true together with a given formula at some
pointed model in a single formula which we will call a complete
conjunction:

\begin{definition}\label{D:conj}
{\em Let $\varphi(x)$ be a formula. A conjunction $\Psi(x)$ of
formulas from $int(\varphi, x, k)$  is called a \emph{complete
$(\varphi, x, k)$-conjunction} iff there is a pointed model $(M,
a)$ such that $M, a \models \Psi(x) \wedge \varphi(x)$, and for
any $\psi(x) \in tp_k(\varphi(x),M,a)$ we have $\Psi(x)
\models\psi(x)$.}
\end{definition}

The two following lemmas summarize some rather obvious properties
of complete conjunctions:

\begin{lemma}\label{L:conj-ex}
For any formula $\varphi(x)$, any natural $k \geq 1$, any $\Theta$
such that $\Sigma_\varphi \subseteq \Theta$ and any pointed
$\Theta$-model $(M, a)$ such that $M, a \models \varphi(x)$ there
is a complete $(\varphi,x, k)$-conjunction $\Psi(x)$ such that $M,
a \models \Psi(x) \wedge \varphi(x)$.
\end{lemma}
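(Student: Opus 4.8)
The plan is to let $\Psi(x)$ be a finite conjunction of representatives, one from each logical-equivalence class present in $tp_k(\varphi(x), M, a)$, and then to check that this $\Psi(x)$ meets all the requirements of Definition \ref{D:conj}. The only thing that makes such a finite choice possible is the finiteness supplied by Lemma \ref{L:fin}, and this is the step on which the whole argument rests.

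In more detail, I would first observe that $\Sigma_\varphi$ is a finite vocabulary (it adjoins to $\{\,R, R_\Box, R_\Diamond\,\}$ only the finitely many unary letters occurring in $\varphi$) and that, by definition, every member of $tp_k(\varphi(x), M, a)$ is a $(\Sigma_\varphi, x, k)$-formula. Applying Lemma \ref{L:fin} with $\Theta := \Sigma_\varphi$ then yields that, up to logical equivalence, there are only finitely many $(\Sigma_\varphi, x, k)$-formulas, and hence only finitely many logical-equivalence classes among the formulas of $tp_k(\varphi(x), M, a)$. I would therefore pick $\psi_1(x), \ldots, \psi_n(x) \in tp_k(\varphi(x), M, a)$ so that every $\psi(x) \in tp_k(\varphi(x), M, a)$ is logically equivalent to some $\psi_i(x)$, and set $\Psi(x) := \psi_1(x) \wedge \cdots \wedge \psi_n(x)$. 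This list is nonempty because, for instance, $ST_{22}(\bot \to \bot, x)$ is a logically valid $(\Sigma_\varphi, x, 1)$-formula and so belongs to $tp_k(\varphi(x), M, a)$ whenever $k \geq 1$; alternatively one may simply read an empty conjunction as a fixed valid formula.

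It then remains to verify the three clauses of Definition \ref{D:conj}. First, $\Psi(x)$ is by construction a conjunction of formulas drawn from $tp_k(\varphi(x), M, a) \subseteq int(\varphi, x, k)$, as required. Second, the given pointed model is the needed witness: each conjunct $\psi_i(x)$ lies in $tp_k(\varphi(x), M, a)$ and is hence true at $(M, a)$, so $M, a \models \Psi(x)$, while $M, a \models \varphi(x)$ holds by hypothesis; thus $M, a \models \Psi(x) \wedge \varphi(x)$, which supplies both the satisfiability clause of Definition \ref{D:conj} and the final conclusion of the lemma. Third, for an arbitrary $\psi(x) \in tp_k(\varphi(x), M, a)$ I would use that $\psi(x)$ is logically equivalent to some conjunct $\psi_i(x)$ of $\Psi(x)$; since $\Psi(x) \models \psi_i(x)$ and $\psi_i(x) \models \psi(x)$, we obtain $\Psi(x) \models \psi(x)$.

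I expect no genuine obstacle here: once Lemma \ref{L:fin} collapses the potentially infinite set $tp_k(\varphi(x), M, a)$ into finitely many equivalence classes, the construction of $\Psi(x)$ and the verification of the three defining clauses are entirely routine. The one point deserving a line of care is that a conjunction must be finite in order to count as a formula of the correspondence language, and this is precisely what the appeal to Lemma \ref{L:fin} secures.
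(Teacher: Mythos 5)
Your proof is correct and takes essentially the same approach as the paper: both arguments note that $tp_k(\varphi(x),M,a)$ is non-empty (witnessed by the valid formula $ST_{22}(\bot \to \bot, x)$), invoke Lemma \ref{L:fin} to extract a finite set of representatives up to logical equivalence, and take their conjunction as $\Psi(x)$, checking the clauses of Definition \ref{D:conj} against the given pointed model $(M,a)$. Your write-up merely makes explicit the verification steps the paper leaves implicit.
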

\begin{proof}
Consider $tp_k(\varphi(x),M,a)$. This set is non-empty since
$ST_{22}(\bot \to \bot, x)$ will be true at $(M, a)$. Due to Lemma
\ref{L:fin}, we can choose in this set a non-empty finite subset
$\Gamma(x)$ such that any formula from $tp_k(\varphi(x),M,a)$ is
logically equivalent to (and hence follows from) a formula in
$\Gamma(x)$. By $\Gamma(x) \subseteq tp_k(\varphi(x),M,a)$, we
also have $M, a \models \bigwedge\Gamma(x)$, therefore,
$\bigwedge\Gamma(x)$ is a complete $(\varphi, x, k)$-conjunction.
\end{proof}

\begin{lemma}\label{L:conj-fin}
For any formula $\varphi(x)$ and any natural $k$ there are, up to
logical equivalence, only finitely many complete $(\varphi, x,
k)$-conjunctions.
\end{lemma}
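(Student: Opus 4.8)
The plan is to reduce the claim directly to Lemma \ref{L:fin}. The crucial observation is that the vocabulary $\Sigma_\varphi$ is finite: by definition it adjoins to $\{R^2, R^2_\Box, R^2_\Diamond\}$ only those unary predicate letters that actually occur in $\varphi$, of which there are finitely many. Hence every member of $int(\varphi, x, k)$ is a $(\Sigma_\varphi, x, k)$-formula, and by Lemma \ref{L:fin} there are, up to logical equivalence, only finitely many such formulas. I would fix once and for all a finite set $\Delta(x) = \{\psi_1(x), \ldots, \psi_N(x)\} \subseteq int(\varphi, x, k)$ of representatives, so that every formula of $int(\varphi, x, k)$ is logically equivalent to some $\psi_i(x)$.

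Now let $\Psi(x)$ be any complete $(\varphi, x, k)$-conjunction. By Definition \ref{D:conj}, $\Psi(x)$ is a finite conjunction of formulas drawn from $int(\varphi, x, k)$. Replacing each conjunct by the representative in $\Delta(x)$ to which it is logically equivalent, and discarding repetitions, one obtains a subset $S \subseteq \{1, \ldots, N\}$ with $\Psi(x)$ logically equivalent to $\bigwedge_{i \in S} \psi_i(x)$. Thus every complete conjunction is logically equivalent to the conjunction over some subset of $\Delta(x)$. Since $\{1, \ldots, N\}$ has only $2^N$ subsets, there are at most $2^N$ complete $(\varphi, x, k)$-conjunctions up to logical equivalence, which is the desired conclusion.

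I expect no genuine obstacle here: the completeness requirement in Definition \ref{D:conj} plays no role in the counting, since the complete conjunctions form merely a subfamily of all finite conjunctions of members of $int(\varphi, x, k)$, and the latter family already has only finitely many logical-equivalence classes by the argument above. The only step that requires any care is the (routine) verification that $\Sigma_\varphi$ is finite, which is precisely what licenses the application of Lemma \ref{L:fin}; everything else is a bookkeeping argument about conjunctions over a fixed finite pool of representatives.
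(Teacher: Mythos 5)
Your proof is correct: every step goes through, and the reduction to Lemma \ref{L:fin} via the finiteness of $\Sigma_\varphi$ is exactly the right lever. The paper, however, applies that lemma one level higher and thereby skips your bookkeeping step entirely: since $r(\psi_1 \wedge \psi_2) = \max(r(\psi_1), r(\psi_2))$, a conjunction of $(\Sigma_\varphi, x, k)$-formulas is itself a $(\Sigma_\varphi, x, k)$-formula, so a complete $(\varphi, x, k)$-conjunction is already an object to which Lemma \ref{L:fin} applies directly, and the finiteness of its equivalence classes is immediate. Your route instead applies Lemma \ref{L:fin} to the pool of possible conjuncts, fixes representatives $\psi_1(x), \ldots, \psi_N(x)$, and counts subsets, yielding the explicit bound $2^N$. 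Both arguments are sound and rest on the same key lemma; the paper's observation makes the proof a one-liner, while yours produces a concrete (if crude) bound on the number of equivalence classes. You are also right that the completeness clause of Definition \ref{D:conj} plays no role in the counting --- the paper's proof likewise ignores it, using only that a complete conjunction is a conjunction of formulas from $int(\varphi, x, k)$.
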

\begin{proof}
It suffices to observe that for any formula $\varphi(x)$ and any
natural $k$, a complete $(\varphi, x, k)$-conjunction is a
$(\Sigma_\varphi, x, k)$-formula. Our lemma then follows from
Lemma \ref{L:fin}.
\end{proof}

As a result, we are now able to establish the `hard' right-to-left
direction of Theorem \ref{L:param}:
\begin{lemma}[Key Lemma $2$]\label{L:t1}
Let $k = r(\varphi(x))$ and let $\varphi(x)$ be invariant with
respect to $(2,2)$-modal $k$-asimulations. Then $\varphi(x)$ is
equivalent to a $(2,2)$-standard $x$-translation of a modal
intuitionistic formula.
\end{lemma}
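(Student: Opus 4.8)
The plan is to prove this Van Benthem style definability result by the standard compactness-and-saturation argument, adapted to the asimulation setting. Let me sketch how I would organize it.

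**Setting up the target formula.** First I would consider the set of complete $(\varphi, x, k)$-conjunctions. By Lemma \ref{L:conj-fin} there are, up to logical equivalence, only finitely many of them; let $\Psi_1(x),\ldots,\Psi_n(x)$ enumerate those that are consistent with $\varphi(x)$, i.e. such that $\Psi_i(x)\wedge\varphi(x)$ is satisfiable. My candidate defining formula will be $\Phi(x) := \bigvee_{i=1}^n \Psi_i(x)$. Since each $\Psi_i$ is a conjunction of formulas from $int(\varphi, x, k)$, and each such conjunct is itself a $(2,2)$-standard translation of an intuitionistic formula, the disjunction $\Phi(x)$ is also (equivalent to) a $(2,2)$-standard translation of an intuitionistic formula --- this uses that the class of these translations is closed under finite conjunction and disjunction, which is immediate from the translation clauses for $\wedge$ and $\vee$. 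So it will suffice to prove $\varphi(x)\equiv\Phi(x)$.

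**The easy inclusion.** The direction $\varphi(x)\models\Phi(x)$ should be routine: if $M,a\models\varphi(x)$, then by Lemma \ref{L:conj-ex} there is a complete $(\varphi,x,k)$-conjunction $\Psi(x)$ with $M,a\models\Psi(x)\wedge\varphi(x)$; this $\Psi$ is (equivalent to) one of the $\Psi_i$, so $M,a\models\Phi(x)$.

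**The hard inclusion via asimulation construction.** The substantive direction is $\Phi(x)\models\varphi(x)$. I would argue by contradiction: suppose $M_1,a\models\Phi(x)$ but $M_1,a\not\models\varphi(x)$ for some pointed $\Theta$-model. From $M_1,a\models\Phi(x)$ pick the disjunct, say $\Psi(x)$, true at $(M_1,a)$; by construction this $\Psi$ is witnessed by a model where $\varphi$ holds, so there is a pointed $\Theta$-model $(M_2,b)$ with $M_2,b\models\Psi(x)\wedge\varphi(x)$ and with $tp_k(\varphi(x),M_2,b)\models\Psi$. The key observation will be that $tp_k(\varphi(x),M_1,a)\subseteq tp_k(\varphi(x),M_2,b)$. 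Now the goal is to build a $(2,2)$-modal $\langle(M_2,b),(M_1,a)\rangle$-$k$-asimulation $(A,B)$ relating $b$ to $a$ --- note the direction is reversed, since invariance transports truth of $\varphi$ from $M_2$ (where it holds) to $M_1$ (where we want a contradiction). Applying invariance would give $M_1,a\models\varphi(x)$, contradicting the assumption.

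**Constructing the asimulation.** The heart of the proof, and the main obstacle, is defining $A$ and $B$ on finite tuples and verifying the step conditions \eqref{E:c4}, \eqref{E:c5}, \eqref{E:cc6-1}, \eqref{E:cc6-2}. I would define $A$ to relate a tuple $(\bar{c}_m, c)$ in one model to $(\bar{d}_m, d)$ in the other precisely when the $(2,2)$-types of the endpoints are suitably included, measured at the decreasing quantifier budget $k - m$; that is, relate them when $tp_{k-m}(\varphi(x),\cdot,c)\subseteq tp_{k-m}(\varphi(x),\cdot,d)$ (with the base points forcing $b\mathrel{A}a$). The relation $B$ must be defined analogously but tailored to the diamond-2 clause, using $imp$-style sets so that the two-stage diamond conditions \eqref{E:cc6-1} and \eqref{E:cc6-2} can be threaded through: $B$ should track the intermediate $R$-successor stage in the $(\Diamond_2)$-translation. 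Verifying each condition amounts to a type-preservation argument: given an appropriate successor in the target model, one extracts the finitely many relevant $(2,2)$-translations of degree $\leq k-(m+1)$ (or $k-(m+2)$ for the box case) true there, forms their complete conjunction, and uses the fact that the corresponding $\Box$- or $\Diamond$-translation of degree one higher belongs to the type of the source point to locate a matching successor; Lemma \ref{L:fin} guarantees finiteness so these conjunctions exist and stay within the degree budget. The bookkeeping with the quantifier-degree budget $m+k\le l$ from Lemma \ref{L:asim} is exactly what makes the finite-tuple version go through without compactness or $\omega$-saturation, and carefully matching the degree decrements to the two-quantifier jumps of the $(\Box_2)$ and $(\Diamond_2)$ clauses is where the delicate part lies.
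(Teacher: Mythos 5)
Your overall route coincides with the paper's own proof: the target formula is a finite disjunction of complete conjunctions (via Lemmas \ref{L:conj-ex} and \ref{L:conj-fin}), the easy inclusion is immediate, and the hard inclusion is obtained by contradiction, by turning type inclusions into a $(2,2)$-modal $k$-asimulation ($tp$-sets for $A$, $imp$-sets for $B$) between the witnessing model of the relevant complete conjunction and the model where $\varphi(x)$ allegedly fails, and then invoking invariance. Your different degree bookkeeping (conjunctions of degree $k$ and levels $k-m$, where the paper uses $k+2$ and $k-m+2$) appears harmless, since the step conditions only fire while the level stays non-negative.

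There is, however, one concrete error, and as written it breaks the decisive step. You state as ``the key observation'' that $tp_k(\varphi(x),M_1,a)\subseteq tp_k(\varphi(x),M_2,b)$, and you record the complete-conjunction property as $tp_k(\varphi(x),M_2,b)\models\Psi$; both are in the trivial or underivable direction. What Definition \ref{D:conj} actually gives for the witness $(M_2,b)$ is that $\Psi(x)\models\psi(x)$ for every $\psi(x)\in tp_k(\varphi(x),M_2,b)$; combined with $M_1,a\models\Psi(x)$ this yields $tp_k(\varphi(x),M_2,b)\subseteq tp_k(\varphi(x),M_1,a)$ --- the reverse of your inclusion. That reversed inclusion is exactly what you need: under your own definition of $A$ (relate $c$ to $d$ when $tp_{k-m}(\cdot,c)\subseteq tp_{k-m}(\cdot,d)$), it is what makes $b\mathrel{A}a$ hold, i.e.\ condition \eqref{E:c2} of the $\langle(M_2,b),(M_1,a)\rangle_k$-asimulation, so that invariance carries $b\models_2\varphi(x)$ over to $a\models_1\varphi(x)$ and produces the contradiction. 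With the inclusion as you state it you only get $a\mathrel{A}b$, and invariance applied to a point where $\varphi(x)$ fails yields nothing. The same care about direction is needed for $B$, which you leave unspecified: as in the paper, $B$ must relate $(\bar{c}_m,c)$ to $(\bar{d}_m,d)$ when the $imp$-set of the \emph{target} is included in the $imp$-set of the \emph{source}, otherwise \eqref{E:cc6-1} and \eqref{E:cc6-2} do not propagate. Two further small points: the models must be taken as $\Sigma_\varphi$-models (the types only control predicates in $\Sigma_\varphi$, so \eqref{E:c3} can fail for a larger $\Theta$), and if $\varphi(x)$ is unsatisfiable your disjunction is empty, so that case must be handled separately via $\bot\leftrightarrow ST_{22}(\bot,x)$, as the paper does.
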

\begin{proof}
We may assume that both $\varphi(x)$ and $\neg\varphi(x)$ are
satisfiable, since both $\bot$ and $\top$ are obviously invariant
with respect to $(2,2)$-modal $k$-asimulations and we have, for
example, the following valid formulas:

\begin{align*}
&\bot \leftrightarrow ST_{22}(\bot,x), \top \leftrightarrow
ST_{22}(\bot \to \bot, x).
\end{align*}

We  may also assume that there are two complete $(\varphi, x, k +
2)$-conjunctions $\Psi(x), \Psi'(x)$ such that
$\Psi'(x)\models\Psi(x)$, and both formulas $\Psi(x) \wedge
\varphi(x)$ and $\Psi'(x) \wedge \neg\varphi(x)$ are satisfiable.

For suppose otherwise. Then take the set of all complete
$(\varphi, x, k + 2)$-conjunctions $\Psi(x)$ such that the formula
$\Psi(x) \wedge \varphi(x)$ is satisfiable. This set is non-empty,
because $\varphi(x)$ is satisfiable, and by Lemma \ref{L:conj-ex},
it can be satisfied only together with some complete $(\varphi, x,
k + 2)$-conjunction. Now, using Lemma \ref{L:conj-fin}, choose in
it a finite non-empty subset $\{\,\Psi_{i_1}(x)\ldots,
\Psi_{i_n}(x)\,\}$ such that any complete $(\varphi, x, k +
2)$-conjunction is equivalent to an element of this subset. We can
show that $\varphi(x)$ is logically equivalent to
$\Psi_{i_1}(x)\vee\ldots \vee\Psi_{i_n}(x)$. In fact, if $M, a
\models \varphi(x)$ then, by Lemma \ref{L:conj-ex}, at least one
complete $(\varphi, x, k + 2)$-conjunction is true at $(M, a)$ and
therefore, its equivalent in $\{\,\Psi_{i_1}(x)\ldots,
\Psi_{i_n}(x)\,\}$ is also true at $(M, a)$, and so, finally we
have
 \[
 M, a \models \Psi_{i_1}(x)\vee\ldots \vee\Psi_{i_n}(x).
 \]
  In
the other direction, if $M, a \models \Psi_{i_1}(x)\vee\ldots
\vee\Psi_{i_n}(x)$, then for some $1 \leq j \leq n$ we have $M, a
\models \Psi_{i_j}(x)$. Then, since
$\Psi_{i_j}(x)\models\Psi_{i_j}(x)$ and by the choice of
$\Psi_{i_j}(x)$ the formula $\Psi_{i_j}(x) \wedge \varphi(x)$ is
satisfiable, so, by our assumption, the formula $\Psi_{i_j}(x)
\wedge \neg\varphi(x)$ must be unsatisfiable, and hence
$\varphi(x)$ must follow from $\Psi_{i_j}(x)$. But in this case we
will have $M, a \models \varphi(x)$ as well. So $\varphi(x)$ is
logically equivalent to $\Psi_{i_1}(x)\vee\ldots,
\vee\Psi_{i_n}(x)$ but the latter formula, being a disjunction of
conjunctions of $(2,2)$-standard $x$-translations of modal
intuitionistic formulas, is itself a $(2,2)$-standard
$x$-translation of a modal intuitionistic formula, and so we are
done.

If, on the other hand, one can take two complete $(\varphi, x, k +
2)$-conjunctions $\Psi(x), \Psi'(x)$ such that
$\Psi'(x)\models\Psi(x)$, and formulas $\Psi(x) \wedge \varphi(x)$
and $\Psi'(x) \wedge \neg\varphi(x)$ are satisfiable, we reason as
follows. Take a pointed $\Sigma_\varphi$-model $(M_1,t)$ such that
$t \models_1 \Psi(x) \wedge \varphi(x)$, and that any formula
$\psi(x) \in tp_{k + 2}(\varphi(x), M_1,t)$ follows from
$\Psi(x)$, and take any pointed model $(M_2, u)$ such that $u
\models_2 \Psi'(x) \wedge \neg\varphi(x)$.

We can construct an $(2,2)$-modal $\langle (M_1,t), (M_2,
u)\rangle_k$-asimulation and thus obtain a contradiction in the
following way. We define it as the ordered couple $(A, B)$, where
for arbitrary $i,j \in \{\,1, 2\,\}$ and  $(\bar{a}_m, a) \in
U^{m+1}_i$, $(\bar{b}_m,b) \in U^{m+1}_j$ we set:
$$
(\bar{a}_m,a)\mathrel{A}(\bar{b}_m,b) \Leftrightarrow (m \leq k
\wedge tp_{k - m + 2}(\varphi(x), M_i, a) \subseteq tp_{k - m +
2}(\varphi(x), M_j, b)),
$$
and, for $B$:
$$
(\bar{a}_m,a)\mathrel{B}(\bar{b}_m,b) \Leftrightarrow (m \leq k
\wedge imp_{k - m + 1}(\varphi(x), M_j, b) \subseteq imp_{k - m +
1}(\varphi(x),M_i, a)).
$$

With these definitions, we can show that $A$ is a basic $\langle
(M_1,t), (M_2, u)\rangle_k$-asimulation, arguing as in Theorem $1$
of \cite{Ol13}. Therefore, in order to show that $(A,B)$ is a
$(2,2)$-modal $\langle (M_1,t), (M_2, u)\rangle_k$-asimulation, we
only need to verify conditions \eqref{E:cc1}, \eqref{E:c5},
\eqref{E:cc6-1}, and \eqref{E:cc6-2}. It is clear that $B$
satisfies \eqref{E:cc1}.

To verify condition \eqref{E:c5}, take any
$(\bar{a}_m,a)\mathrel{A}(\bar{b}_m,b)$ such that $m + 1 < k$ and
any $d, f \in U_j$ such that $b\mathrel{R_j}d$ and
$d\mathrel{R_{\Box j}}f$. In this case we will also have $m + 2
\leq k$. Then consider $\overline{tp}_{k-m}(\varphi(x),M_j, f)$.
This set is non-empty, since by our assumption we have $k - m \geq
0$. Therefore, as we have $r(ST_{22}(\bot, x)) = 0$, we will also
have $ST_{22}(\bot, x) \in \overline{tp}_{k-m}(\varphi(x),M_j,
f)$. Then, according to our Lemma \ref{L:fin}, there exists a
finite non-empty set of logical equivalents for
$\overline{tp}_{k-m}(\varphi(x),M_j, f)$. Choosing this finite
set, we in fact choose some finite $\{\,ST_{22}(J_1,x)\ldots
ST_{22}(J_q, x)\,\} \subseteq \overline{tp}_{k-m}(\varphi(x),M_j,
f)$ such that
\begin{align*}
&\forall \psi(x) \in \overline{tp}_{k-m}(\varphi(x),M_j,
f)(\psi(x)\models ST_{22}(J_1,x)\vee\ldots \vee ST_{22}(J_q, x)).
\end{align*}
But then we obtain that
\[
b \not\models_j ST_{22}(\Box(J_1\vee\ldots \vee J_q), x).
\]
In fact, $d, f$ jointly falsify this boxed disjunction for $(M_j,
b)$. But, given that
\[
\{\,ST_{22}(J_1,x)\ldots ST_{22}(J_q, x)\,\} \subseteq
\overline{tp}_{k-m}(\varphi(x),M_j, f),
\]
the standard translation of boxed disjunction under consideration
must be in

\noindent $\overline{tp}_{k-m + 2}(\varphi,M_j, b)$. Note,
further, that by $(\bar{a}_m,a)\mathrel{A}(\bar{b}_m,b)$ we have
\[
tp_{k - m + 2}(\varphi(x),M_i, a) \subseteq tp_{k - m +
2}(\varphi(x),M_j, b),
\]
thus:
\[
\overline{tp}_{k - m + 2}(\varphi(x),M_j, b) \subseteq
\overline{tp}_{k - m + 2}(\varphi(x),M_i, a),
\]
 and therefore this boxed disjunction must be false at $(M_i,
a)$ as well. But then take any $c,e \in U_i$ such that
$a\mathrel{R_i}c$, $c\mathrel{R_{\Box i}}e$ and $c, e$ falsify the
boxed disjunction under consideration. By choice of
$\{\,ST(J_1,x)\ldots ST(J_q, x)\,\}$ it follows that
\[
\overline{tp}_{k - m}(\varphi(x),M_j, f) \subseteq
\overline{tp}_{k - m}(\varphi(x),M_i, e),
\]
and thus
\[
tp_{k - m}(\varphi(x),M_i, e) \subseteq tp_{k - m}(\varphi(x),M_j,
f),
\]
But then, again by the definition of $A$, and given the fact that
$m + 2\leq k$, we must also have $(\bar{a}_m,a, c,
e)\mathrel{A}(\bar{b}_m,b, d, f)$, and so condition \eqref{E:c5}
holds.

To verify condition \eqref{E:cc6-1}, take any
$(\bar{a}_m,a)\mathrel{A}(\bar{b}_m,b)$ such that $m + 1 < k$ and
any $d \in U_j$ such that $b\mathrel{R_j}d$. In this case we will
also have $m + 2 \leq k$. Then consider $imp_{k-m}(\varphi(x),M_j,
d)$. This set is non-empty, since by our assumption we have $k - m
\geq 0$. Therefore, as we have $r(ST_{22}(\bot, x)) = 0$, we will
also have $ST_{22}(\bot, x) \in imp_{k-m}(\varphi(x),M_j, d)$.
Then, according to our Lemma \ref{L:fin}, there exists a finite
non-empty set of logical equivalents for
$imp_{k-m}(\varphi(x),M_j, d)$. Choosing this finite set, we in
fact choose some finite
$$
\{\,ST_{22}(J_1,x)\ldots ST_{22}(J_q, x)\,\} \subseteq
imp_{k-m}(\varphi(x),M_j, d)
$$
such that
\begin{align*}
&\forall \psi(x) \in imp_{k-m}(\varphi(x),M_j, d)(\psi(x)\models
ST_{22}(J_1,x)\vee\ldots \vee ST_{22}(J_q, x)).
\end{align*}
But then we obtain that
\[
b \not\models_j ST_{22}(\Diamond(J_1\vee\ldots \vee J_q), x).
\]
In fact, $d$ falsify this disjunction for $(M_j, b)$. But, given
that
\[
\{\,ST_{22}(J_1,x)\ldots ST_{22}(J_q, x)\,\} \subseteq
imp_{k-m}(\varphi(x),M_j, d),
\]
the standard translation of the modalized disjunction under
consideration must be in

\noindent $\overline{tp}_{k-m + 2}(\varphi(x),M_j, b)$. Note,
further, that by $(\bar{a}_m,a)\mathrel{A}(\bar{b}_m,b)$ we have
\[
tp_{k - m + 2}(\varphi(x),M_i, a) \subseteq tp_{k - m +
2}(\varphi(x),M_j, b),
\]
thus:
\[
\overline{tp}_{k - m + 2}(\varphi(x),M_j, b) \subseteq
\overline{tp}_{k - m + 2}(\varphi(x),M_i, a),
\]
 and therefore the modalized disjunction must be false at $(M_i,
a)$ as well. But then take any $c \in U_i$ such that
$a\mathrel{R_i}c$, and for every $e$, such that
$c\mathrel{R_{\Diamond i}}e$, we have
$$
e \not\models_j ST_{22}(J_1,x)\vee\ldots \vee ST_{22}(J_q, x)
$$

By choice of $\{\,ST_{22}(J_1,x)\ldots ST_{22}(J_q, x)\,\}$ it
follows that
\[
imp_{k - m}(\varphi(x),M_j, d) \subseteq imp_{k -
m}(\varphi(x),M_i, c),
\]
But then, again by the definition of $B$, and given the fact that
$m + 2\leq k$, we must also have $(\bar{a}_m,a,
c)\mathrel{B}(\bar{b}_m,b, d)$, and so condition \eqref{E:cc6-1}
holds.

Finally, to verify condition \eqref{E:cc6-2}, take any
$(\bar{a}_m,a)\mathrel{A}(\bar{b}_m,b)$ such that $m < k$ and any
$c \in U_i$ such that $a\mathrel{R_{\Diamond i}}c$. In this case
we will also have $m + 1 \leq k$. Then consider $tp_{k-m +
1}(\varphi(x),M_i, c)$. This set is non-empty, since by our
assumption we have $k - m \geq 1$. Therefore, as we have
$r(ST_{22}(\bot \to \bot, x)) = 1$, we will also have
$ST_{22}(\bot  \to \bot, x) \in tp_{k-m + 1}(\varphi(x),M_i, c)$.
Then, according to our Lemma \ref{L:fin}, there exists a finite
non-empty set of logical equivalents for $tp_{k-m +
1}(\varphi(x),M_i, c)$. Choosing this finite set, we in fact
choose some finite $\{\,ST_{22}(I_1,x)\ldots ST_{22}(I_p, x)\,\}
\subseteq tp_{k-m + 1}(\varphi(x),M_i, c)$ such that
\begin{align*}
&\forall \psi(x) \in tp_{k-m + 1}(\varphi(x),M_i,
c)(ST_{22}(I_1,x)\wedge\ldots \wedge ST_{22}(I_p, x)\models
\psi(x)).
\end{align*}
But then we obtain that
\[
ST_{22}((I_1\wedge\ldots \wedge I_p), x) \notin imp_{k-m +
1}(\varphi(x),M_i, a).
\]
Note, further, that by $(\bar{a}_m,a)\mathrel{B}(\bar{b}_m,b)$ we
have
\[
imp_{k - m + 1}(\varphi(x),M_j, b) \subseteq imp_{k - m +
1}(\varphi(x),M_i, a),
\]
thus:
\[
ST_{22}((I_1\wedge\ldots \wedge I_p), x) \notin imp_{k-m +
2}(\varphi(x),M_j, b).
\]
 But then take any $d \in U_j$ such that
$b\mathrel{R_{\Diamond j}}d$ and we have
$$
d \models_j ST_{22}(I_1,x)\wedge\ldots \wedge ST_{22}(I_p, x).
$$

By choice of $\{\,ST_{22}(I_1,x)\ldots ST_{22}(I_p, x)\,\}$ it
follows that
\[
tp_{k - m + 1}(\varphi(x),M_i, c) \subseteq tp_{k - m +
1}(\varphi(x),M_j, d),
\]
But then, again by the definition of $A$, and given the fact that
$m + 1\leq k$, we must also have $(\bar{a}_m,a,
c)\mathrel{A}(\bar{b}_m,b, d)$, and so condition \eqref{E:cc6-2}
holds.

Therefore $(A, B)$ is a $(2,2)$-modal $\langle (M_1,t), (M_2,
u)\rangle_k$-asimulation, and we have got our contradiction in
place.
\end{proof}

We can now finish the proof of $(2,2)$-instantiation of Theorem
\ref{L:param}:

\begin{proof}
The  `only if' direction we have by Corollary \ref{L:c-k-inv}. In
the other direction, let $\varphi(x)$ be invariant with respect to
$k$-asimulations for some $k$. If $k \leq r(\varphi)$, then every
$(2,2)$-modal $r(\varphi)$-asimulation is a $(2,2)$-modal
$k$-asimulation, so $\varphi(x)$ is invariant with respect to
$(2,2)$-modal $r(\varphi)$-asimulations and hence, by Lemma
\ref{L:t1}, $\varphi(x)$ is equivalent to a standard
$x$-translation of an intuitionistic formula. If, on the other
hand, $r(\varphi) < k$, then set $l := k - r(\varphi)$ and
consider variables $\bar{y}_l$ not occurring in $\varphi(x)$. Then
$r(\forall\bar{y}_l\varphi(x)) = k$ and $\varphi(x)$ is logically
equivalent to $\forall\bar{y}_l\varphi(x)$, so the latter formula
is also invariant with respect to $k$-asimulations, and hence by
Theorem \ref{L:t1} $\forall\bar{y}_l\varphi(x)$ is logically
equivalent to a standard $x$-translation of an intuitionistic
formula. But then $\varphi(x)$ is equivalent to this standard
$x$-translation as well.
\end{proof}

\subsection{Proof of Theorem \ref{L:main}}

We now turn to the proof of $(2,2)$-instantiation of Theorem
\ref{L:main}. The  `only if' direction we again have by Corollary
\ref{L:c-k-inv}:

\begin{corollary}\label{L:c-inv}
If $\varphi(x)$ is equivalent to a $(2,2)$-standard
$x$-translation of an intuitionistic formula, then $\varphi(x)$ is
invariant with respect to $(2,2)$-modal asimulations.
\end{corollary}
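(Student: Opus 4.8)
The plan is to reduce the unbounded statement to the bounded one already proved, rather than to redo the whole induction. Fix a modal intuitionistic formula $I$ with $\varphi(x)$ equivalent to $ST_{22}(I,x)$. By Corollary \ref{L:c-k-inv} there is a $k \in \mathbb{N}$ such that $\varphi(x)$ is invariant with respect to $(2,2)$-modal $k$-asimulations. Now let $(A,B)$ be an arbitrary $(2,2)$-modal $\langle (M_1,t),(M_2,u)\rangle$-asimulation with $a\mathrel{A}b$ and $a\models_1\varphi(x)$, where $a\in U_1$ and $b\in U_2$. To establish invariance it suffices to manufacture, out of $(A,B)$, a $(2,2)$-modal $\langle (M_1,t),(M_2,u)\rangle_k$-asimulation that still links $a$ to $b$; the quoted invariance then yields $b\models_2\varphi(x)$ at once.

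First I would define the \emph{history-forgetting lift} $(A^*,B^*)$ by $(\bar a_m,a')\mathrel{A^*}(\bar b_m,b')\Leftrightarrow a'\mathrel{A}b'$ and $(\bar a_m,a')\mathrel{B^*}(\bar b_m,b')\Leftrightarrow a'\mathrel{B}b'$, for tuples of arbitrary common length; both lifts depend only on the final coordinates. Then I would check that $(A^*,B^*)$ satisfies each of the eight clauses of Definition \ref{D:k-asim22}. This is routine, since every parametrized clause collapses onto its unparametrized counterpart from Definition \ref{D:asim22}: \eqref{E:c1} and \eqref{E:cc1} come from \eqref{E:c22} and \eqref{E:cc11}, \eqref{E:c2} from \eqref{E:c11}, \eqref{E:c3} from \eqref{E:c33}, and each of \eqref{E:c4}, \eqref{E:c5}, \eqref{E:cc6-1}, \eqref{E:cc6-2} from the corresponding \eqref{E:c44}, \eqref{E:c55}, \eqref{E:cc66-1}, \eqref{E:cc66-2} by appending the freshly produced witnesses to the (irrelevant) common prefix. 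The length guards $m<k$ and $m+1<k$ only weaken the antecedents, so they never obstruct the construction. Since \eqref{E:c11} gives $t\mathrel{A}u$, hence $t\mathrel{A^*}u$, the lift is a genuine $(2,2)$-modal $\langle (M_1,t),(M_2,u)\rangle_k$-asimulation, and $a\mathrel{A^*}b$ holds as a pair of length-one tuples; applying the invariance supplied by Corollary \ref{L:c-k-inv} (or, equivalently, Lemma \ref{L:asim} directly with $m=0$ and $l=k$) finishes the argument.

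I expect no serious obstacle: the entire difficulty of the $(2,2)$-case, namely the two-step passage through $B$ in the diamond clause, was already absorbed into Lemma \ref{L:asim}, and the lift merely transports it. The one point worth flagging is that the bidirectional witness $\overset{\leftrightarrow}{A^*}$ demanded by \eqref{E:c4} must be inherited from the bidirectional $\overset{\leftrightarrow}{A}$ delivered by \eqref{E:c44}; this is exactly where discarding the tuple history has to be seen to be harmless, and it is, because $A^*$ holds of a pair of extended tuples precisely when $A$ holds of their last coordinates in the same direction. As a fully self-contained alternative, one could instead re-run the induction on $I$ of Lemma \ref{L:asim} verbatim with every reference to $m$, $l$, and the degree inequalities simply erased, letting the modal cases invoke \eqref{E:c55}, \eqref{E:cc66-1}, \eqref{E:cc66-2} and the implication case invoke \eqref{E:c44}. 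Either route closes the `only if' direction of the $(2,2)$-instantiation of Theorem \ref{L:main}.
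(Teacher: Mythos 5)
Your proposal is correct and matches the paper's own proof essentially verbatim: the paper also reduces to Corollary \ref{L:c-k-inv} by forming exactly your history-forgetting lift (there called $(A',B')$, relating tuples precisely when their last coordinates are $A$-, respectively $B$-, related) and observing that it satisfies all clauses of Definition \ref{D:k-asim22} for every $k$, the guards $m<k$ and $m+1<k$ being harmless. The only difference is cosmetic — you fix $k$ before building the lift, while the paper builds the lift first — and your explicit check of the bidirectional clause \eqref{E:c4} via \eqref{E:c44} is a detail the paper leaves under ``straightforward to verify.''
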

\begin{proof}
Let $\varphi(x)$ be logically equivalent to $ST_{22}(I,x)$ for
some intuitionistic formula $I$. For an arbitrary $\Theta
\supseteq \Sigma_\varphi$, $\Theta$-models $M_1$ and $M_2$, and
arbitrary $t \in U_1$, $u\in U_2$ let $(A,B)$ be a $(2,2)$-modal
$\langle (M_1,t), (M_2, u)\rangle$-asimulation, so that we have
$t\mathrel{A}u$. Assume that
$$
t \models_1 \varphi(x).
$$
Then consider the ordered couple $(A',B')$ such that:
\begin{align*}
&A' = \{\,\langle(\bar{a}_m, a),(\bar{b}_m,
b;)\rangle\mid\\
&\qquad\qquad\mid \exists i, j(\{\,i,j \,\} = \{\,1,2\,\} \wedge
\bar{a}_m, a \in U_i \wedge \bar{b}_m,b \in U_j \wedge
a\mathrel{A}b)\,\};
\end{align*}

\begin{align*}
&B' = \{\,\langle(\bar{a}_m, a),(\bar{b}_m,
b;)\rangle\mid\\
&\qquad\qquad\mid \exists i, j(\{\,i,j \,\} = \{\,1,2\,\} \wedge
\bar{a}_m, a \in U_i \wedge \bar{b}_m,b \in U_j \wedge
a\mathrel{B}b)\,\};
\end{align*}
It is straightforward to verify that $(A',B')$ is a $(2,2)$-modal
$\langle (M_1,t), (M_2, u)\rangle_k$-asimulation for every $k \in
\mathbb{N}$. Moreover, we still have $t\mathrel{A'}u$. By
Corollary \ref{L:c-k-inv}, there is a natural $k$, such that
$\varphi(x)$ is invariant with respect to $(2,2)$-modal
$k$-asimulation, therefore we have
$$
u \models_2 \varphi(x).
$$
Since the $(2,2)$-modal $\langle (M_1,t), (M_2,
u)\rangle$-asimulation $(A,B)$ was chosen arbitrarily, this means
that $\varphi(x)$ is invariant with respect to $(2,2)$-modal
asimulations.
\end{proof}

To proceed, we need to introduce some further notions and results
from classical model theory. For a model $M$ and $\bar{a}_n \in U$
let $[M, \bar{a}_n]$ be the extension of $M$ with $\bar{a}_n$ as
new individual constants interpreted as themselves. It is easy to
see that there is a simple relation between the truth of a formula
at a sequence of elements of a $\Theta$-model and the truth of its
substitution instance in an extension of the above-mentioned kind;
namely, for any $\Theta$-model $M$, any $\Theta$-formula
$\varphi(\bar{y}_n,\bar{w}_m)$ and any $\bar{a}_n,\bar{b}_m \in U$
it is true that:

\[
[M, \bar{a}_n], \bar{b}_m \models \varphi(\bar{a}_n,\bar{w}_m)
\Leftrightarrow M, \bar{a}_n, \bar{b}_m \models
\varphi(\bar{y}_n,\bar{w}_m).
\]

We will call a theory of $M$ (and write $Th(M)$) the set of all
first-order sentences true at $M$. We will call an $n$-type of $M$
a set of formulas $\Gamma(\bar{w}_n)$ consistent with $Th(M)$.

\begin{definition}
Let $M$ be a $\Theta$-model. $M$ is \emph{$\omega$-saturated} iff
for all $k \in \mathbb{N}$ and for all $\bar{a}_n \in U$, every
$k$-type $\Gamma(\bar{w}_k)$ of $[M, \bar{a}_n]$ is satisfiable in
$[M, \bar{a}_n]$.
\end{definition}

Definition of $\omega$-saturation normally requires satisfiability
of $1$-types only. However, our modification is equivalent to the
more familiar version: see e.g. \cite[Lemma 4.31, p. 73]{Doets96}.

It is known that every model can be elementarily extended to an
$\omega$-saturated model; in other words, the following lemma
holds:

\begin{lemma}\label{L:ext}
Let $M$ be a $\Theta$-model. Then there is an $\omega$-saturated
extension $M'$ of $M$ such that for all $\bar{a}_n \in U$ and
every $\Theta$-formula $\varphi(\bar{w}_n)$:
\[
M, \bar{a}_n \models \varphi(\bar{w}_n) \Leftrightarrow M',
\bar{a}_n \models \varphi(\bar{w}_n).
\]
\end{lemma}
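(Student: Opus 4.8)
The plan is to obtain $M'$ as the union of a countable elementary chain of models, each successor of which realizes all the relevant types of its predecessor, and then to check that the union is both an elementary extension of $M$ and $\omega$-saturated. The whole statement is a standard fact of classical model theory and could simply be cited (as the excerpt already hints), but the construction I would give runs as follows.

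First I would isolate the single-step extension: for any $\Theta$-model $N$ there is an elementary extension $N \prec N^+$ in which, for every finite tuple $\bar{a}_n$ from $N$ and every $k$, each $k$-type $\Gamma(\bar{w}_k)$ of $[N, \bar{a}_n]$ is realized. To build $N^+$, for each such pair $(\bar{a}_n, \Gamma)$ I introduce a block of fresh individual constants $\bar{c}$, and I form the theory consisting of the elementary diagram of $N$ together with all the sentences $\gamma(\bar{c})$ with $\gamma \in \Gamma$, ranging over all the pairs with pairwise disjoint constant blocks. Any finite fragment of this theory mentions only finitely many types, each through finitely many of its members; since each $\Gamma$ is by definition consistent with $Th([N, \bar{a}_n])$, these finitely many finite fragments are realized in $N$ itself, so the whole finite fragment is satisfiable in $N$ with the constants interpreted accordingly. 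By compactness the theory has a model, whose reduct to $\Theta$ is the required $N^+$, with elementarity guaranteed by the inclusion of the full elementary diagram of $N$.

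Next I would iterate this step along $\omega$: set $M_0 = M$ and $M_{n+1} = (M_n)^+$, and let $M' = \bigcup_{n} M_n$. By the Tarski--Vaught elementary chain theorem each $M_n$ is an elementary submodel of $M'$; in particular $M = M_0 \prec M'$, which is exactly the truth-preservation equivalence $M, \bar{a}_n \models \varphi(\bar{w}_n) \Leftrightarrow M', \bar{a}_n \models \varphi(\bar{w}_n)$ of the statement. To verify $\omega$-saturation, I would fix a finite $\bar{a}_n \in U'$ and a $k$-type $\Gamma(\bar{w}_k)$ of $[M', \bar{a}_n]$. Since $\bar{a}_n$ is finite, all its entries already lie in some $M_n$; and since $M_n \prec M'$ we have $Th([M_n, \bar{a}_n]) = Th([M', \bar{a}_n])$, so $\Gamma$ is in fact a $k$-type of $[M_n, \bar{a}_n]$. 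By construction $\Gamma$ is realized in $M_{n+1} = (M_n)^+$, and because $M_{n+1} \prec M'$ the same tuple realizes $\Gamma$ in $M'$. Hence every type over a finite parameter set is realized, i.e. $M'$ is $\omega$-saturated.

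The one genuinely informative point — the rest being routine appeals to compactness and the elementary chain theorem — is the interaction between the length of the chain and the degree of saturation: a plain $\omega$-chain suffices precisely because the parameter tuples in the definition of $\omega$-saturation are finite and so are absorbed at some finite stage $M_n$, after which elementarity of the chain carries the realization up to the union. For $\kappa$-saturation with $\kappa > \aleph_0$ this absorption would fail and one would have to iterate along a chain of suitable cofinality; here, however, the finiteness built into the notion of $\omega$-saturation is exactly what makes the countable construction go through.
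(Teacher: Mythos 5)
Your proof is correct, but it takes a different route from the paper, which does not prove Lemma \ref{L:ext} at all: it simply observes that the statement is a trivial corollary of a known result of classical model theory, namely \cite[Lemma 5.1.14, p.~216]{ChK73}. Your elementary-chain construction is the standard argument behind that cited result, and all its steps are sound: the single-step extension via compactness applied to the elementary diagram plus fresh constant blocks works (the finite satisfiability of each fragment inside $N$ itself uses, implicitly but correctly, the completeness of $Th([N,\bar a_n])$, which turns consistency of a finite piece of a type into its realization in $N$); the Tarski--Vaught chain theorem gives $M \prec M'$, which is exactly the displayed equivalence; and the absorption of finite parameter tuples at a finite stage, followed by upward transfer of the realizing tuple along $M_{n+1} \prec M'$, gives $\omega$-saturation. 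A pleasant feature of your argument is that it verifies the paper's slightly nonstandard formulation of $\omega$-saturation (realization of $k$-types over finite tuples, rather than $1$-types) directly, so one does not even need the equivalence the paper quotes from \cite[Lemma 4.31, p.~73]{Doets96}; and your closing remark about why length $\omega$ suffices here but not for higher saturation is exactly the right diagnosis. What the citation buys the paper is brevity and reliance on a more general theorem; what your proof buys is self-containedness and a transparent view of where finiteness of the parameter sets is used.
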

The latter lemma is a trivial corollary of e.g. \cite[Lemma
5.1.14, p. 216]{ChK73}.

A very useful property of $\omega$-saturated models is that one
can define among them $(2,2)$-modal asimulations more or less
according to the strategy assumed in the proof of Lemma
\ref{L:t1}. In order to do this, however, we need to re-define the
types used in the above proof. We collect the required changes in
the following definition:

\begin{definition}\label{D:inttheory}
Let $M$ be a $\Theta$-model, $t \in U$ and let $x$ be a variable
in correspondence language. Then we define $int_x(\Theta)$ to be
the set of all $\Theta$-formulas that are $(2,2)$-standard
$x$-translations of modal intuitionistic formulas. We further set:
$$
tp_x(M,t) = \{ \varphi(x) \in int_x(\Theta) \mid M,t \models
\varphi(x) \};
$$
$$
\overline{tp}_x(M,t) = \{ \varphi(x) \in int_x(\Theta) \mid M,t
\not\models \varphi(x) \};
$$
$$
imp_x(M,t) = \bigcap\{ \overline{tp}_x(M,u)
\mid\iota(R_\Diamond)(t,u) \}.
$$
\end{definition}

The analogue of `contrapositive' scheme mentioned above holds,
namely, for arbitrary models $M$, $M'$, and elements $a \in U$,
and $a' \in U'$ we have:
$$
tp_x(M,a) \subseteq tp_x(M',a') \Leftrightarrow
\overline{tp}_x(M',a') \subseteq \overline{tp}_x(M,a).
$$

The following lemma gives the precise version of the above
statement:

\begin{lemma}[Key Lemma $3$]\label{L:sat}
Let $M_1$, $M_2$ be $\omega$-saturated $\Theta$-models, let $t \in
U_1$, and $t' \in U_2$ be such that $tp(M_1, t) \subseteq tp(M_2,
t')$. Let $x$ be a variable in correspondence language. Then the
ordered couple $(A,B)$ such that:
\[
A = \{\,\langle a,b\rangle\mid \exists i,j (\{\,i,j\,\} = \{\,1,
2\,\} \wedge tp_x(M_i,a) \subseteq tp_x(M_j,b))\,\}
\]
and:
\[
B = \{\,\langle a,b\rangle\mid \exists i,j (\{\,i,j\,\} = \{\,1,
2\,\} \wedge imp_x(M_i,a) \supseteq imp_x(M_j,b))\,\}
\]
 is a $(2,2)$-modal $\langle(M_1, t),(M_2, t')\rangle$-asimulation.
\end{lemma}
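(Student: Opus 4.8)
The plan is to verify, one by one, the eight conditions that Definition \ref{D:asim22} imposes on the couple $(A,B)$. Four of them are immediate or already handled in the literature. Conditions \eqref{E:c22} and \eqref{E:cc11} hold by the very shape of the definitions of $A$ and $B$, which only ever relate a point of $U_i$ to a point of $U_j$ with $\{i,j\}=\{1,2\}$. Condition \eqref{E:c11} follows from the hypothesis $tp(M_1,t)\subseteq tp(M_2,t')$: restricting to the $\Theta$-formulas lying in $int_x(\Theta)$ gives $tp_x(M_1,t)\subseteq tp_x(M_2,t')$, which is exactly $t\mathrel{A}t'$ with $i=1,j=2$. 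Condition \eqref{E:c33} is equally direct, since each $P(x)$ is the $(2,2)$-translation of a propositional letter and hence lies in $int_x(\Theta)$. Finally, \eqref{E:c44} is the pure intuitionistic-propositional step and I would obtain it exactly as in the $\omega$-saturated argument of \cite{Ol13}: realizing, at an $R_i$-successor of $a$, the type $\{R(a,y)\}\cup\{\chi(y)\mid\chi\in tp_x(M_j,d)\}\cup\{\lnot\chi(y)\mid\chi\in\overline{tp}_x(M_j,d)\}$, whose finite satisfiability is secured by transferring an obstructing implication $(\bigwedge J_s)\to(\bigvee L_r)$ along $a\mathrel{A}b$ and instantiating at $d$.

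The three remaining conditions \eqref{E:c55}, \eqref{E:cc66-1}, \eqref{E:cc66-2} are the genuinely modal ones, and I would treat all three by a single uniform recipe. To satisfy such a clause I must produce one or two points, reachable from the given source by the prescribed relations, whose intuitionistic type or $imp$-set stands in the required inclusion to a given target point. I would build these points by realizing an appropriate first-order type over the relevant expanded model $[M_i,a]$ or $[M_j,b]$, which is legitimate because $M_1,M_2$ are $\omega$-saturated and the defining clause of $\omega$-saturation quantifies over all such expansions by constants. By saturation it then suffices to show that every finite fragment of the type is satisfiable, and here the crucial move is that such a fragment collapses, using closure of $int_x(\Theta)$ under finite $\wedge$ and $\vee$, to a \emph{single} obstruction formula that is itself an $ST_{22}$ of a modal intuitionistic formula. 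Being in $int_x(\Theta)$, that formula is governed by the inclusion witnessing $a\mathrel{A}b$ (or $a\mathrel{B}b$), and instantiating the transferred formula at the given successor contradicts the defining property of the relevant $\overline{tp}$- or $imp$-set.

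Concretely, for the box clause \eqref{E:c55} (given $a\mathrel{A}b$, $b\mathrel{R_j}d$, $d\mathrel{R_{\Box j}}f$) I would realize over $[M_i,a]$ the type $\{R(a,y),R_\Box(y,z)\}\cup\{\lnot\chi(z)\mid\chi\in\overline{tp}_x(M_j,f)\}$; a finite fragment failing to be satisfiable would force $a$ to satisfy $ST_{22}(\Box(\bigvee L_r),x)$, which transfers to $b$ and, instantiated at $d$ and $f$, says $f\models\bigvee L_r$, contradicting $L_r\in\overline{tp}_x(M_j,f)$. For \eqref{E:cc66-1} (given $a\mathrel{A}b$, $b\mathrel{R_j}d$) I would realize over $[M_i,a]$ the type $\{R(a,y)\}\cup\{\lnot\exists z(R_\Diamond(y,z)\wedge\chi(z))\mid\chi\in imp_x(M_j,d)\}$; since $imp_x(M_j,d)$ is closed under finite disjunction, a finite fragment reduces to a single $\chi\in imp_x(M_j,d)$, unsatisfiability forces $a\models ST_{22}(\Diamond J,x)$, this transfers to $b$ and at $d$ produces an $R_\Diamond$-successor of $d$ satisfying $\chi$, contradicting $\chi\in imp_x(M_j,d)$. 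For \eqref{E:cc66-2} (given $a\mathrel{B}b$, i.e.\ $imp_x(M_j,b)\subseteq imp_x(M_i,a)$, and $a\mathrel{R_{\Diamond i}}c$) I would realize over $[M_j,b]$ the type $\{R_\Diamond(b,z)\}\cup\{\chi(z)\mid\chi\in tp_x(M_i,c)\}$; a finite fragment collapses, via closure of $tp_x(M_i,c)$ under $\wedge$, to a single $\chi$ with $c\models\chi$, so $\chi\notin imp_x(M_i,a)$ (witnessed by $c$), whence $\chi\notin imp_x(M_j,b)$ by the inclusion, giving the required $R_\Diamond$-successor $d$ of $b$ with $\chi$ true, i.e.\ $c\mathrel{A}d$.

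The main obstacle I expect lies precisely in the two diamond clauses and their coupling through $B$ and the $imp$-sets. One has to keep straight which model is being saturated, which direction each inclusion runs, and the fact that $\overline{tp}$ and $imp$ are the sets closed under finite disjunction while $tp$ is closed under finite conjunction; using the wrong closure, or transferring a formula along the wrong relation, breaks the contradiction. Recognizing in each case that the obstruction is genuinely a boxed disjunction, a diamond of a disjunction, or a diamond of a conjunction --- hence a \emph{bona fide} member of $int_x(\Theta)$ --- is what licenses the appeal to $a\mathrel{A}b$ or $a\mathrel{B}b$ and is the real crux of the argument.
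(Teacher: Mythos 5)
Your proposal is correct and follows essentially the same route as the paper's own proof: the basic conditions are discharged via the known $\omega$-saturation argument for basic asimulations, and each of \eqref{E:c55}, \eqref{E:cc66-1}, \eqref{E:cc66-2} is verified by realizing exactly the same parameter types over $[M_i,a]$ or $[M_j,b]$, with finite satisfiability secured by transferring a boxed disjunction, a diamond of a disjunction, or a diamond of a conjunction along $A$ or $B$. The only difference is presentational --- you argue finite satisfiability contrapositively (assuming a fragment fails and deriving a contradiction at the given successors), whereas the paper establishes it directly by noting the relevant modalized formula is false (or true) at $a$ --- which is the same argument in mirror image.
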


\begin{proof}
It is evident that $B$ satisfies \eqref{E:cc11}, and arguing as in
Lemma 8 of \cite{Ol14}, we can show that $A$ is a basic
$\langle(M_1, t),(M_2, t')\rangle$-asimulation. So we are left to
consider the last three conditions of Definition \ref{D:asim22}.

To verify \emph{condition \eqref{E:c55}}, choose any $i$, $j$ such
that $\{\,i,j\,\} = \{\,1, 2\,\}$, any $a \in U_i$, $b \in U_j$
such that $a\mathrel{A}b$, that is to say, $tp_x(M_i, a)\subseteq
tp_x(M_j,b)$ and choose any $d,f \in U_j$ for which we have
$b\mathrel{R_j}d$ and $d\mathrel{R_{\Box j}}f$.

Consider $\overline{tp}_x(M_j,f)$. If $\{\,ST_{22}(J_1,x)\ldots
ST_{22}(J_q, x)\,\}$ is a finite subset of this type, then we have
$$
b \not\models_j ST(\Box(J_1 \vee\ldots \vee J_q), x).
$$
Since by contraposition of $a\mathrel{A}b$ we have that
$\overline{tp}_x(M_j, b)\subseteq \overline{tp}_x(M_i,a)$, we
obtain that
$$
a \not\models_i ST(\Box(J_1 \vee\ldots \vee J_q), x).
$$
This means that every finite subset of the type

\[ \{\,R(a, y),
R_\Box(y,x)\,\} \cup \{ \neg\psi(x) \mid \psi(x) \in
\overline{tp}_x(M_j,f) \}
\]
 is satisfiable at $[M_i, a]$. Therefore, by compactness of first-order logic, this
set is consistent with $Th([M_i, a])$ and, by $\omega$-saturation
of both $M_1$ and $M_2$, it must be satisfied in $[M_i, a]$ by
some $c,e \in U_i$. So for any such $c$ and $e$ we will have
$a\mathrel{R_i}c$, $c\mathrel{R_{\Box i}}e$ and, moreover,
\[
\forall \psi \in \overline{tp}_x(M_j,f)(e \not\models_i \psi(x)).
\]
Thus we have that $\overline{tp}_x(M_j, f) \subseteq
\overline{tp}_x(M_i,e)$, and further, by contraposition, that
$tp_x(M_i,e) \subseteq tp_x(M_j,f)$. Thus we get that
$e\mathrel{A}f$ and condition \eqref{E:c55} is verified.

To verify \emph{condition \eqref{E:cc66-1}}, choose any $i$, $j$
such that $\{\,i,j\,\} = \{\,1, 2\,\}$, any $a \in U_i$, $b \in
U_j$ such that $a\mathrel{A}b$, that is to say, $tp_x(M_i,
a)\subseteq tp_x(M_j,b)$ and choose any $d \in U_j$ for which we
have $b\mathrel{R_j}d$.

Consider $imp_x(M_j,d)$. If $\{\,ST_{22}(J_1,x)\ldots ST_{22}(J_q,
x)\,\}$ is a finite subset of this type, then we have
$$
b \not\models_j ST(\Diamond(J_1 \vee\ldots \vee J_q), x).
$$
Since by contraposition of $a\mathrel{A}b$ we have that
$\overline{tp}_x(M_j, b)\subseteq \overline{tp}_x(M_i,a)$, we
obtain that
$$
a \not\models_i ST(\Diamond(J_1 \vee\ldots \vee J_q), x).
$$
This means that every finite subset of the type

\[ \{\,R(a, x)\,\} \cup \{ \forall y(R_\Diamond(x,y) \to\neg\psi(y))\mid \psi(x) \in imp_x(M_j,d) \}
\]
 is satisfiable at $[M_i, a]$. Therefore, by compactness of first-order logic, this
set is consistent with $Th([M_i, a])$ and, by $\omega$-saturation
of both $M_1$ and $M_2$, it must be satisfied in $[M_i, a]$ by
some $c \in U_i$. So for any such $c$ we will have
$a\mathrel{R_i}c$ and, moreover,
\[
(\forall \psi(x) \in imp_x(M_j,d))(\forall e \in U_i)(R_{\Diamond
i}(c,e) \Rightarrow e \not\models_i \psi(x)).
\]
Thus we have that $imp_x(M_j, d) \subseteq imp_x(M_i,c)$, and
therefore, that $c\mathrel{B}d$. Thus condition \eqref{E:cc66-1}
is verified.

Finally, to verify \emph{condition \eqref{E:cc66-2}}, choose any
$i$, $j$ such that $\{\,i,j\,\} = \{\,1, 2\,\}$, any $a \in U_i$,
$b \in U_j$ such that $a\mathrel{B}b$, that is to say, $imp_x(M_j,
b) \subseteq imp_x(M_i,a)$ and choose any $c \in U_i$ for which we
have $a\mathrel{R_{\Diamond i}}c$.

Consider $tp_x(M_i,c)$. If $\{\,ST_{22}(I_1,x)\ldots ST_{22}(I_p,
x)\,\}$ is a finite subset of this type, then we have
$$
c \models_i ST((I_1 \wedge\ldots \wedge I_p), x).
$$
Therefore, the set $\{\,ST_{22}(I_1,x)\ldots ST_{22}(I_p, x)\,\}$
is disjoint from $imp_x(M_i,a)$, and thus it is also disjoint from
$imp_x(M_j,b)$. Therefore, the formula $ST(\Diamond(I_1
\wedge\ldots \wedge I_p), x)$ is also verified by some
$R_{\Diamond j}$-successor of $b$. More formally, this means that
every finite subset of the type

\[ \{\,R_\Diamond(b, x)\,\} \cup \{ \psi(x)\mid \psi(x) \in tp_x(M_i,c) \}
\]
 is satisfiable at $[M_j, b]$. Therefore, by compactness of first-order logic, this
set is consistent with $Th([M_j, b])$ and, by $\omega$-saturation
of both $M_1$ and $M_2$, it must be satisfied in $[M_j, b]$ by
some $d \in U_j$. So for any such $d$ we will have
$b\mathrel{R_{\Diamond j}}d$ and, moreover,
\[
(\forall \psi(x) \in tp_x(M_i,c))(d \models_j \psi(x)).
\]
Thus we have that $tp_x(M_i, c) \subseteq tp_x(M_j,c)$, and
therefore, that $c\mathrel{A}d$. The condition \eqref{E:cc66-2} is
verified.
\end{proof}

We are prepared now to prove the hard part of
$(2,2)$-instantiation of Theorem \ref{L:main}:

\begin{lemma}\label{L:hard}
Let $\varphi(x)$ be invariant with respect to $(2,2)$-modal
asimulations. Then $\varphi(x)$ is equivalent to $(2,2)$-standard
translation of a modal intuitionistic formula.
\end{lemma}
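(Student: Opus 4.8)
The plan is to prove this via the standard Van~Benthem argument, reducing the general case to the bounded case already handled by Lemma~\ref{L:t1} (Key Lemma~$2$) and invoking $\omega$-saturation through Key Lemma~$3$ (Lemma~\ref{L:sat}). The global strategy is to show that $\varphi(x)$ is a \emph{semantic consequence} of the set $tp_x(M,a)$ of $(2,2)$-standard translations it entails, whenever $M,a\models\varphi(x)$; a compactness argument then collapses this into entailment from a single finite conjunction, which is itself a $(2,2)$-standard translation. First I would define $\mathrm{CONS}(\varphi) = \{\,\psi(x)\in int_x(\Theta)\mid \varphi(x)\models\psi(x)\,\}$, the set of $(2,2)$-standard translations of modal intuitionistic formulas that follow from $\varphi(x)$. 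The key claim to establish is
\[
\mathrm{CONS}(\varphi) \models \varphi(x).
\]
Once this is in hand, compactness of first-order logic gives a finite subset $\{\psi_1(x),\dots,\psi_n(x)\}\subseteq \mathrm{CONS}(\varphi)$ with $\psi_1(x)\wedge\cdots\wedge\psi_n(x)\models\varphi(x)$; since each $\psi_i$ follows from $\varphi$ by definition, $\varphi(x)$ is equivalent to $\bigwedge_i\psi_i(x)$, which is a conjunction of $(2,2)$-standard translations and hence itself a $(2,2)$-standard translation of a modal intuitionistic formula.

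To prove the key claim, I would take an arbitrary pointed model $(N,b)$ with $N,b\models\mathrm{CONS}(\varphi)$ and produce a pointed model $(M,a)$ with $M,a\models\varphi(x)$ and $tp_x(M,a)\subseteq tp_x(N,b)$; the goal is then to transport the truth of $\varphi(x)$ from $(M,a)$ to $(N,b)$. The existence of such an $(M,a)$ is the usual consequence of the definition of $\mathrm{CONS}(\varphi)$ together with compactness: the set $\{\varphi(x)\}\cup\{\psi(x)\mid \psi(x)\in tp_x(N,b)\}$ is finitely satisfiable, for otherwise some finite conjunction of formulas true at $(N,b)$ would refute $\varphi(x)$, forcing its negation --- a $(2,2)$-standard translation's negation need not itself be such a translation, so here I would instead argue contrapositively that any $\psi(x)\in int_x(\Theta)$ with $\varphi(x)\models\psi(x)$ lies in $\mathrm{CONS}(\varphi)$ and hence holds at $(N,b)$, so $tp_x(M,a)\subseteq tp_x(N,b)$ is forced for the witnessing model.

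Next I would pass to $\omega$-saturated elementary extensions $M_1\succeq M$ and $M_2\succeq N$ using Lemma~\ref{L:ext}, with distinguished points $t=a$ and $t'=b$; by that lemma $M_1,t\models\varphi(x)$, the inclusion $tp_x(M_1,t)\subseteq tp_x(M_2,t')$ is preserved, and $M_2,t'\models\varphi(x)$ would suffice (again by Lemma~\ref{L:ext}) to conclude $N,b\models\varphi(x)$. On the $\omega$-saturated models Key Lemma~$3$ (Lemma~\ref{L:sat}) applies: the ordered couple $(A,B)$ defined there is a $(2,2)$-modal $\langle(M_1,t),(M_2,t')\rangle$-asimulation, and by construction $t\mathrel{A}t'$. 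Since $\varphi(x)$ is assumed invariant with respect to $(2,2)$-modal asimulations and $M_1,t\models\varphi(x)$, invariance yields $M_2,t'\models\varphi(x)$, hence $N,b\models\varphi(x)$, completing the key claim.

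The main obstacle is the correct handling of the \emph{asymmetry} forced by intuitionistic standard translations: unlike in the classical Van~Benthem theorem one cannot simply take the negation of $\varphi$, because the $(2,2)$-standard translations are not closed under negation and their semantic consequence is a one-directional inclusion of \emph{types} rather than full elementary equivalence. Everything must be phrased in terms of the inclusion $tp_x(M_i,\cdot)\subseteq tp_x(M_j,\cdot)$ and its contrapositive for $\overline{tp}_x$, and I would have to verify carefully that the compactness step produces a model whose $tp_x$ is genuinely contained in that of $(N,b)$ rather than merely agreeing on single formulas. Once the bookkeeping of types and the invocation of Lemma~\ref{L:sat} are aligned, the remainder is the routine compactness collapse described above.
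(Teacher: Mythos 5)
Your overall architecture coincides with the paper's: define the set of modal consequences $\mathrm{CONS}(\varphi)$ (the paper's $ic(\varphi(x))$), show it entails $\varphi(x)$, collapse by compactness to a finite conjunction (itself a $(2,2)$-standard translation), and prove the entailment by producing a pointed model of $\varphi(x)$ whose type is included in that of an arbitrary model $(N,b)$ of $\mathrm{CONS}(\varphi)$, then passing to $\omega$-saturated elementary extensions and invoking Lemma~\ref{L:sat} together with invariance. All of that downstream machinery is handled correctly. The genuine gap is in the single step that carries the real weight: producing $(M,a)$ with $M,a\models\varphi(x)$ and $tp_x(M,a)\subseteq tp_x(N,b)$. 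The set you write down, $\{\varphi(x)\}\cup\{\psi(x)\mid\psi(x)\in tp_x(N,b)\}$, is wrong on two counts. First, a model of it yields the inclusion in the \emph{wrong direction}: it gives $tp_x(N,b)\subseteq tp_x(M,a)$, whereas Lemma~\ref{L:sat} plus invariance transports $\varphi(x)$ from the point with the \emph{smaller} type to the point with the larger one, so this inclusion is useless for concluding $N,b\models\varphi(x)$. Second, its finite satisfiability cannot be established by your argument: as you yourself note, negations of standard translations are not standard translations, so unsatisfiability of $\varphi(x)\wedge\psi_1(x)\wedge\dots\wedge\psi_q(x)$ puts no useful formula into $\mathrm{CONS}(\varphi)$. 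Your attempted repair --- ``any $\psi(x)$ with $\varphi(x)\models\psi(x)$ holds at $(N,b)$, so $tp_x(M,a)\subseteq tp_x(N,b)$ is forced'' --- is a non sequitur: the formulas in $tp_x(M,a)$ are merely true at the particular witnessing model $(M,a)$ and need not be consequences of $\varphi(x)$, so nothing forces them to hold at $(N,b)$. Your closing paragraph concedes exactly this point (``I would have to verify carefully\ldots'') without supplying the verification.

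The missing idea is to run compactness on the negative side of the type. Consider
\[
S=\{\varphi(x)\}\cup\{\neg\psi(x)\mid\psi(x)\in\overline{tp}_x(N,b)\}.
\]
A model $(M,a)$ of $S$ falsifies everything $(N,b)$ falsifies, i.e. $\overline{tp}_x(N,b)\subseteq\overline{tp}_x(M,a)$, which by contraposition is exactly $tp_x(M,a)\subseteq tp_x(N,b)$. And finite satisfiability of $S$ \emph{does} follow from your disjunction argument, because standard translations are closed under disjunction (which is all that is needed; closure under negation is irrelevant here): if $\varphi(x)\wedge\neg\psi_1(x)\wedge\dots\wedge\neg\psi_q(x)$ were unsatisfiable, then $\varphi(x)\models\psi_1(x)\vee\dots\vee\psi_q(x)$, so this disjunction lies in $\mathrm{CONS}(\varphi)$, hence is true at $(N,b)$, hence some $\psi_i(x)$ is true at $(N,b)$, contradicting $\psi_i(x)\in\overline{tp}_x(N,b)$. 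For comparison, the paper reaches the same model $(M_2,b)$ by a different bookkeeping device: it argues by contradiction, choosing for every pointed model of $\varphi(x)$ with domain inside $\mathbb{N}$ a witness formula separating it from $(M_1,a)$, and then uses compactness together with L\"{o}wenheim--Skolem to land the satisfying model back inside the indexed family. Your direct route, once corrected as above, is arguably cleaner and avoids the L\"{o}wenheim--Skolem detour entirely; with $(M,a)$ so obtained, the rest of your proof goes through as written.
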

\begin{proof} We may assume that $\varphi(x)$ is
satisfiable, for $\bot$ is clearly invariant with respect to
$(2,2)$-modal asimulations and $\bot \leftrightarrow ST_{22}(\bot,
x)$ is a valid formula. Throughout this proof, will write
$ic(\varphi(x))$ for the following set:
$$
\{ \psi(x) \in int_x(\Sigma_\varphi) \mid \varphi(x) \models
\psi(x) \}
$$

Our strategy will be to show that $ic(\varphi(x)) \models
\varphi(x)$. Once this is done, we will apply compactness of
first-order logic and conclude that $\varphi(x)$ is equivalent to
a finite conjunction of standard $(2,2)$-modal $x$-translations of
intuitionistic formulas and hence to a standard $x$-translation of
the corresponding intuitionistic conjunction.

To show this, take any $\Sigma_\varphi$-model $M_1$ and  $a \in
U_1$ such that $a \models_1 ic(\varphi(x))$. Then, of course, we
also have $ic(\varphi(x)) \subseteq tp_x(M_1,a)$. Such a model
exists, because $\varphi(x)$ is satisfiable and $ic(\varphi(x))$
will be satisfied in any model satisfying $\varphi(x)$. Then we
can also choose a $\Sigma_\varphi$-model $M_2$ and $b \in U_2$
such that $b \models_2 \varphi(x)$ and $tp_x(M_2, b) \subseteq
tp_x(M_1, a)$.

For suppose otherwise. Then for any $\Sigma_\varphi$-model $M$
such that $U \subseteq \mathbb{N}$ and any $c \in U$ such that $M,
c \models \varphi(x)$ we can choose a modal intuitionistic formula
$I_{(M, c)}$ such that $ST_{22}(I_{(M, c)}, x)$ is in $tp_x(M, c)$
but not in $tp_x(M_1, a)$. Then consider the set
\[
S = \{\,\varphi(x)\,\} \cup \{\,\neg ST_{22}(I_{(M, c)}, x)\mid M,
c \models \varphi(x)\,\}
\]
Let $\{\,\varphi(x), \neg ST_{22}(I_1, x)\ldots , \neg
ST_{22}(I_q, x)\,\}$ be a finite subset of this set. If this set
is unsatisfiable, then we must have $\varphi(x) \models
ST_{22}(I_1, x)\vee\ldots \vee ST_{22}(I_q, x)$, but then we will
also have $(ST_{22}(I_1, x)\vee\ldots \vee ST_{22}(I_q, x)) \in
ic(\varphi(x)) \subseteq tp_x(M_1, a)$, and hence $(ST_{22}(I_1,
x)\vee\ldots \vee ST_{22}(I_q, x))$ will be true at $(M_1, a)$.
But then at least one of $ST_{22}(I_1, x)\ldots ST_{22}(I_q, x)$
must also be true at $(M_1, a)$, which contradicts the choice of
these formulas. Therefore, every finite subset of $S$ is
satisfiable, and, by compactness, $S$ itself is satisfiable as
well. But then, by the L\"{o}wenheim-Skolem property, we can take
a $\Sigma_\varphi$-model $M'$ such that $U' \subseteq \mathbb{N}$
and $g \in U'$ such that $S$ is true at $(M',g)$ and this will be
a model for which we will have both $M', g \models
ST_{22}(I_{(M',g)}, x)$ by choice of $I_{(M',g)}$ and $M',g
\not\models ST_{22}(I_{(M',g)}, x)$ by satisfaction of $S$, a
contradiction.

Therefore, we will assume in the following that some
$\Sigma_\varphi$-model $M_2$ and some $b \in U_2$ are such that $a
\models_1 ic(\varphi(x))$, $b \models_2 \varphi(x)$, and
$tp_x(M_2,b) \subseteq tp_x(M_1, a)$. According to Lemma
\ref{L:ext}, there exist $\omega$-saturated elementary extensions
$M'$, $M''$ of $M_1$ and $M_2$, respectively. We have:
\begin{align}
&M_1, a \models \varphi(x) \Leftrightarrow M', a \models
\varphi(x)\label{E:m1}\\
&M'', b \models \varphi(x)\label{E:m2}
\end{align}
Also, since $M_1$, $M_2$ are elementarily equivalent to $M'$,
$M''$, respectively, we have
\[
tp_x(M'',b) = tp_x(M_2,b) \subseteq tp_x(M_1, a) = tp_x(M', a).
\]
By $\omega$-saturation of $M'$, $M''$ and Lemma \ref{L:sat}, the
ordered couple $(A,B)$ such that:
\[
A = \{\,\langle c,d\rangle\mid \exists\mu,\mu'(\{\,\mu, \mu'\,\} =
\{\,M',M''\,\} \wedge tp_x(\mu,c)\subseteq tp_x(\mu',d))\,\}
\]
\[
B = \{\,\langle c,d\rangle\mid \exists\mu,\mu'(\{\,\mu, \mu'\,\} =
\{\,M',M''\,\} \wedge imp_x(\mu,c)\supseteq imp_x(\mu',d))\,\}
\]
is a $(2,2)$-modal $\langle(M'', b),(M', a)\rangle$-asimulation.
But then by \eqref{E:m2} and invariance of $\varphi(x)$ we get
$M', a \models \varphi(x)$, and further, by \eqref{E:m1} we
conclude that $M_1, a \models \varphi(x)$. Therefore, $\varphi(x)$
in fact follows from $ic(\varphi(x))$.
\end{proof}

Theorem \ref{L:main} now follows from Corollary \ref{L:c-inv} and
Lemma \ref{L:hard}.

\section{Other cases}\label{S:other}
We now briefly show how to obtain the proofs for the other three
instantiations of Theorems \ref{L:param} and \ref{L:main}. The
general scheme of the proofs in these cases is very similar to the
the proofs given in the previous section. The main difference is
that in the other cases we need to assume different sets of
conditions in the definitions of modal $k$-asimulations and modal
asimulations \emph{simpliciter}. This affects the three key lemmas
of the previous section, namely, Lemmas \ref{L:asim}, \ref{L:t1},
and \ref{L:sat}, in that some parts of their proofs become
irrelevant and some new parts need to be supplied instead.
Accordingly, when treating the other three instantiations of our
main results below, we mainly concentrate on how to revise the
proofs of these three lemmas.

\subsection{Case $i = 1$, $j = 2$}
In order to obtain the proofs of Theorems \ref{L:param} and
\ref{L:main} one needs to revise the proofs given in Section in
the following way:

\emph{Ad Key Lemma $1$}:

Revise the inductive step in case where $I = \Box J$ as follows:

In this case we have
\[
\varphi(x) = \forall y(R_\Box(x,y) \to ST_{12}(J, y))).
\]

Assume that:

\begin{align}
&a \models_i \forall y(R_\Box(x,y) \to ST_{12}(J, y)))\label{E:3l0}\\
&(\bar{a}_m, a)\mathrel{A}(\bar{b}_m, b)\label{E:3l1}\\
&m + r(\varphi(x)) \leq l\label{E:3l2}
\end{align}
Moreover, it follows from definition of $r$ that:
\begin{align}
&r(\varphi(x)) \geq 1 \label{E:3l3}\\
&r(ST(J, y)) \leq r(\varphi(x)) - 1\label{E:3l4}
\end{align}
Now, consider arbitrary $d \in U_j$ such that $b\mathrel{R_{\Box
j}}d$. Since \eqref{E:3l2} and \eqref{E:3l3} clearly imply that $m
< l$, it follows from \eqref{E:3l1} and \eqref{E:cc5} that one can
choose a $c \in U_i$, such that:
\begin{align}
&a\mathrel{R_{\Box i}}c\label{E:3l5}\\
&(\bar{a}_m, a,c)\mathrel{A}(\bar{b}_m, b,d)\label{E:3l6}
\end{align}
So, we reason as follows:
\begin{align}
&c \models_i ST_{12}(J, y)\label{E:3l7} &&\text{(from
\eqref{E:3l0} and \eqref{E:3l5})}\\
&m + 1 + r(ST_{12}(J, y)) \leq l\label{E:3l8} &&\text{(from
\eqref{E:3l2} and \eqref{E:3l4})}\\
&d \models_j ST_{12}(J, y)\label{E:3l9} &&\text{(from
\eqref{E:3l6}, \eqref{E:3l7}, \eqref{E:3l8} by IH)}
\end{align}
Since $d$ was chosen to be an arbitrary $R_{\Box j}$-successor of
$b$, this means that
\[
b \models_j \forall y(R_\Box(x,y) \to ST_{12}(J, y))),
\]
and we are done.

\emph{Ad Key Lemma $2$}:

Replace the verification of condition \eqref{E:c5} with the
following verification of condition \eqref{E:cc5}:

Take any $(\bar{a}_m,a)\mathrel{A}(\bar{b}_m,b)$ such that $m < k$
and any $d \in U_j$ such that $b\mathrel{R_{\Box j}}d$. In this
case we will also have $m + 1 \leq k$. Then consider
$\overline{tp}_{k-m}(\varphi(x),M_j, d)$. This set is non-empty,
since by our assumption we have $k - m \geq 0$. Therefore, as we
have $r(ST_{12}(\bot, x)) = 0$, we will also have $ST_{12}(\bot,
x) \in \overline{tp}_{k-m+ 1}(\varphi(x),M_j, d)$. Then, according
to Lemma \ref{L:fin}, there exists a finite non-empty set of
logical equivalents for $\overline{tp}_{k-m+ 1}(\varphi(x),M_j,
d)$. Choosing this finite set, we in fact choose some finite

\noindent$\{\,ST_{12}(J_1,x)\ldots ST_{12}(J_q, x)\,\} \subseteq
\overline{tp}_{k-m+ 1}(\varphi(x),M_j, d)$ such that
\begin{align*}
&\forall \psi(x) \in \overline{tp}_{k-m + 1}(\varphi(x),M_j,
d)(\psi(x)\models ST_{12}(J_1,x)\vee\ldots \vee ST_{12}(J_q, x)).
\end{align*}
But then we obtain that
\[
b \not\models_j ST_{12}(\Box(J_1\vee\ldots \vee J_q), x).
\]
In fact, $d$ falsifies this boxed disjunction for $(M_j, b)$. But,
given that
\[
\{\,ST_{12}(J_1,x)\ldots ST_{12}(J_q, x)\,\} \subseteq
\overline{tp}_{k-m+ 1}(\varphi(x),M_j, d),
\]
the standard translation of boxed disjunction under consideration
must be in

\noindent$\overline{tp}_{k-m + 2}(\varphi(x),M_j, b)$. Note,
further, that by $(\bar{a}_m,a)\mathrel{A}(\bar{b}_m,b)$ we have
\[
tp_{k - m + 2}(\varphi(x),M_i, a) \subseteq tp_{k - m +
2}(\varphi(x),M_j, b),
\]
thus:
\[
\overline{tp}_{k - m + 2}(\varphi(x),M_j, b) \subseteq
\overline{tp}_{k - m + 2}(\varphi(x),M_i, a),
\]
 and therefore this boxed disjunction must be false at $(M_i,
a)$ as well. But then take any $c \in U_i$ such that
$a\mathrel{R_{\Box i}}c$ and $c$ falsifies the disjunction under
consideration. By choice of $\{\,ST_{12}(J_1,x)\ldots ST_{12}(J_q,
x)\,\}$ it follows that
\[
\overline{tp}_{k - m + 1}(\varphi(x),M_j, d) \subseteq
\overline{tp}_{k - m + 1}(\varphi(x),M_i, c),
\]
and thus
\[
tp_{k - m + 1}(\varphi(x),M_i, c) \subseteq tp_{k - m +
1}(\varphi(x),M_j, d),
\]
But then, again by the definition of $A$, and given the fact that
$m + 1\leq k$, we must also have $(\bar{a}_m,a,
c)\mathrel{A}(\bar{b}_m,b, d)$, and so condition \eqref{E:cc5}
holds.

\emph{Ad Key Lemma $3$}:

Replace the verification of condition \eqref{E:c55} with the
following verification of condition \eqref{E:cc55}:

Choose any $i$, $j$ such that $\{\,i,j\,\} = \{\,1, 2\,\}$, any $a
\in U_i$, $b \in U_j$ such that $a\mathrel{A}b$, that is to say,
$tp_x(M_i, a)\subseteq tp_x(M_j,b)$ and choose any $d \in U_j$ for
which we have $b\mathrel{R_{\Box j}}d$.

Consider $\overline{tp}_x(M_j,d)$. If $\{\,ST_{12}(J_1,x)\ldots
ST_{12}(J_q, x)\,\}$ is a finite subset of this type, then we have
$$
b \not\models_j ST_{12}(\Box(J_1 \vee\ldots \vee J_q), x).
$$
Since by contraposition of $a\mathrel{A}b$ we have that
$\overline{tp}_x(M_j, b)\subseteq \overline{tp}_x(M_i,a)$, we
obtain that
$$
a \not\models_i ST_{12}(\Box(J_1 \vee\ldots \vee J_q), x).
$$
This means that every finite subset of the type

\[ \{\,R_\Box(a,x)\,\} \cup \{ \neg\psi(x) \mid \psi(x) \in
\overline{tp}_x(M_j,d) \}
\]
 is satisfiable at $[M_i, a]$. Therefore, by compactness of first-order logic, this
set is consistent with $Th([M_i, a])$ and, by $\omega$-saturation
of both $M_1$ and $M_2$, it must be satisfied in $[M_i, a]$ by
some $c \in U_i$. So for any such $c$ we will have
$a\mathrel{R_{\Box i}}c$ and, moreover,
\[
\forall \psi \in \overline{tp}_x(M_j,d)(c \not\models_i \psi(x)).
\]
Thus we have that $\overline{tp}_x(M_j, d) \subseteq
\overline{tp}_x(M_i,c)$, and further, by contraposition, that
$tp_x(M_i,c) \subseteq tp_x(M_j,d)$. Thus we get that
$c\mathrel{A}d$ and condition \eqref{E:c55} is verified.

\subsection{Case $i = 2$, $j = 1$}
The changes in three key lemmas in this case will look as follows:

\emph{Ad Key Lemma $1$}:

Revise the inductive step for the case $I = \Diamond J$ as
follows:

In this case we have
\[
\varphi(x) = \exists y(R_\Diamond(x,y) \wedge ST_{21}(J, y)).
\]

Assume that:

\begin{align}
&a \models_i \exists y(R_\Diamond(x,y) \wedge ST_{21}(J, y))\label{E:4l0}\\
&(\bar{a}_m, a)\mathrel{A}(\bar{b}_m, b)\label{E:4l1}\\
&m + r(\varphi(x)) \leq l\label{E:4l2}
\end{align}
Moreover, it follows from definition of $r$ that:
\begin{align}
&r(\varphi(x)) \geq 1 \label{E:4l3}\\
&r(ST(J, y)) \leq r(\varphi(x)) - 1\label{E:4l4}
\end{align}
Now, by \eqref{E:4l0} choose a $c \in U_i$ such that
\begin{align}
&a\mathrel{R_{\Diamond i}}c\label{E:4l5}\\
&c \models_i ST_{21}(J, y)\label{E:4l6}
\end{align}
Since \eqref{E:4l2} and \eqref{E:4l3} clearly imply that $m < l$,
it follows from \eqref{E:4l1} and \eqref{E:c6} that one can choose
a $d \in U_j$, such that:
\begin{align}
&b\mathrel{R_{\Diamond j}}d\label{E:4l7}\\
&(\bar{a}_m, a,c)\mathrel{A}(\bar{b}_m, b,d)\label{E:4l8}
\end{align}
So, we get that:
\begin{align}
&m + 1 + r(ST_{21}(J, y)) \leq l\label{E:4l9} &&\text{(from
\eqref{E:4l2} and \eqref{E:4l4})}\\
&d \models_j ST_{21}(J, y)\label{E:4l10} &&\text{(from
\eqref{E:4l6}, \eqref{E:4l8}, \eqref{E:4l9} by IH)}
\end{align}
Finally, from \eqref{E:4l7} and \eqref{E:4l10} infer that:
\[
b \models_j \exists y(R_\Diamond(x,y) \wedge ST_{21}(J, y)),
\]
and we are done.

\emph{Ad Key Lemma $2$}:

Replace the verification of conditions \eqref{E:cc6-1} and
\eqref{E:cc6-2} with the following verification of condition
\eqref{E:c6}:

Take any $(\bar{a}_m,a)\mathrel{A}(\bar{b}_m,b)$ such that $m < k$
and any $c \in U_i$ such that $a\mathrel{R_{\Diamond i}}c$. In
this case we will also have $m + 1 \leq k$. Then consider $tp_{k-m
+1}(\varphi(x),M_i, c)$. This set is non-empty, since by our
assumption we have $k - m + 1 \geq 1$. Therefore, as we have
$r(ST_{21}(\bot \to \bot, x)) = 1$, we will also have
$ST_{21}(\bot \to \bot, x) \in tp_{k-m+ 1}(\varphi(x),M_i, c)$.
Then, according to Lemma \ref{L:fin}, there exists a finite
non-empty set of logical equivalents for $tp_{k-m+
1}(\varphi(x),M_i, c)$. Choosing this finite set, we in fact
choose some finite $\{\,ST_{21}(I_1,x)\ldots ST_{21}(I_p, x)\,\}
\subseteq tp_{k-m+ 1}(\varphi(x),M_i, c)$ such that
\begin{align*}
&\forall \psi(x) \in tp_{k-m + 1}(\varphi(x),M_i,
c)(ST_{21}(I_1,x)\wedge\ldots \wedge ST_{21}(I_p, x)\models\psi(x)
).
\end{align*}
But then we obtain that
\[
a \models_i ST_{21}(\Diamond(I_1\wedge\ldots \wedge I_p), x).
\]
In fact, $c$ veriifies this modalized conjunction for $(M_i, a)$.
But, given that
\[
\{\,ST_{21}(I_1,x)\ldots ST_{21}(I_p, x)\,\} \subseteq tp_{k-m+
1}(\varphi(x),M_i, c),
\]
the standard translation of modalized conjunction under
consideration must be in $tp_{k-m + 2}(\varphi(x),M_i, a)$. Note,
further, that by $(\bar{a}_m,a)\mathrel{A}(\bar{b}_m,b)$ we have
\[
tp_{k - m + 2}(\varphi(x),M_i, a) \subseteq tp_{k - m +
2}(\varphi(x),M_j, b),
\]
 and therefore this modalized conjunction must be true at $(M_j,
b)$ as well. But then take any $d \in U_j$ such that
$b\mathrel{R_{\Diamond j}}d$ and $d$ verifies the conjunction
under consideration. By choice of $\{\,ST_{21}(I_1,x)\ldots
ST_{21}(I_p, x)\,\}$ it follows that
\[
tp_{k - m + 1}(\varphi(x),M_i, c) \subseteq tp_{k - m +
1}(\varphi(x),M_j, d),
\]
But then, again by the definition of $A$, and given the fact that
$m + 1\leq k$, we must also have $(\bar{a}_m,a,
c)\mathrel{A}(\bar{b}_m,b, d)$, and so condition \eqref{E:c6}
holds.

\emph{Ad Key Lemma $3$}:

Replace the verification of conditions \eqref{E:cc66-1} and
\eqref{E:cc66-2} with the following verification of condition
\eqref{E:c66}:

Choose any $i$, $j$ such that $\{\,i,j\,\} = \{\,1, 2\,\}$, any $a
\in U_i$, $b \in U_j$ such that $a\mathrel{A}b$, that is to say,
$tp_x(M_i, a)\subseteq tp_x(M_j,b)$ and choose any $c \in U_i$ for
which we have $a\mathrel{R_{\Diamond i}}c$.

Consider $tp_x(M_i,c)$. If $\{\,ST_{21}(I_1,x)\ldots ST_{21}(I_p,
x)\,\}$ is a finite subset of this type, then we have
$$
a \models_i ST_{21}(\Diamond(I_1 \wedge\ldots \wedge I_p), x).
$$
By $tp_x(M_i, a)\subseteq tp_x(M_j,b)$, we obtain that
$$
b \models_j ST_{12}(\Diamond(I_1 \wedge\ldots \wedge I_p), x).
$$
This means that every finite subset of the type

\[ \{\,R_\Diamond(b,x)\,\} \cup \{ \psi(x) \mid \psi(x) \in
tp_x(M_i,c) \}
\]
 is satisfiable at $[M_j, b]$. Therefore, by compactness of first-order logic, this
set is consistent with $Th([M_j, b])$ and, by $\omega$-saturation
of both $M_1$ and $M_2$, it must be satisfied in $[M_j, b]$ by
some $d \in U_j$. So for any such $d$ we will have
$b\mathrel{R_{\Diamond j}}d$ and, moreover,
\[
\forall \psi \in tp_x(M_i,c)(d \models_j \psi(x)).
\]
Thus we have that $tp_x(M_i,c) \subseteq tp_x(M_j,d)$. Thus we get
that $c\mathrel{A}d$ and condition \eqref{E:c66} is verified.

Another important revision of the proofs given in Section for the
case $i = 2$, $j = 1$ is omission of every reference to relation
$B$, since asimulations are now being defined as single relations
rather than ordered couples of relations.

Finally, in order to accommodate the proofs in Section to the case
$i = j = 1$ one just needs to combine the revisions given in the
present sections in a straightforward way.

\section{Characterization modulo first-order definable classes of models}\label{S:Rest}

Theorem \ref{L:main} establishes a criterion for the equivalence
of first-order formula to a standard translation of intuitionistic
formula on arbitrary first-order models. But one may have a
special interest in a proper subclass $\varkappa$ of the class of
first-order models viewing the models which are not in this
subclass as irrelevant, non-intended etc. In this case one may be
interested in the criterion for equivalence of a given first-order
formula to a standard translation of an intuitionistic predicate
formula \emph{over} this particular subclass. It turns out that if
some parts of this subclass are first-order axiomatizable then
only a slight modification of our general criterion is necessary
to solve this problem.

To tighten up on terminology, we introduce the following
definitions:
\begin{definition}\label{D:k}
Let $\varkappa$ be a class of models. Then:
\begin{enumerate}
\item $\varkappa(\Theta) = \{\,M \in \varkappa\mid M\text{ is a
$\Theta$-model}\,\}$; \item $\varkappa(\Theta)$ is first-order
axiomatizable iff there is a set $Ax$ of $\Theta$-sentences, such
that a $\Theta$-model $M$ is in $\varkappa$ iff $M \models Ax$;
\item A set $\Gamma$ of $\Theta$-formulas is
$\varkappa$-satisfiable iff $\Gamma$ is satisfied in some model of
$\varkappa$; \item A $\Theta$-formula $\varphi$
$\varkappa$-follows from $\Gamma$ $(\Gamma \models^\varkappa
\varphi)$ iff $\Gamma \cup \{\,\neg\varphi\,\}$ is
$\varkappa$-unsatisfiable; \item $\Theta$-formulas $\varphi$ and
$\psi$ are $\varkappa$-equivalent iff $\varphi \models^\varkappa
\psi$ and $\psi \models^\varkappa \varphi$.
\end{enumerate}
\end{definition}
It is clear that for any class $\varkappa$, such that $Ax$
first-order axiomatizes $\varkappa(\Theta)$, any set $\Gamma$ of
$\Theta$-formulas and any $\Theta$-formula $\varphi$, $\Gamma$ is
$\varkappa$-satisfiable iff $\Gamma \cup Ax$ is satisfiable, and
$\Gamma \models^\varkappa \varphi$ iff $\Gamma \cup Ax \models
\varphi$.

We say, further, that a formula $\varphi(x)$ is
$\varkappa$-invariant with respect to $(i,j)$-modal asimulations
(where $i,j \in \{ 1,2 \}$) iff it is invariant with respect to
the class of $(i,j)$-modal asimulations connecting models in
$\varkappa$.

Now for the criterion of $\varkappa$-equivalence to an
$(i,j)$-standard translation of modal intuitionistic formula:
\begin{theorem}\label{L:int-main}
Let $\varkappa$ be a class of first-order models such that
$\varkappa(\Theta)$ is first-order axiomatizable for all finite
$\Theta$, and let $\varphi(x)$ be $\varkappa$-invariant with
respect to $(i,j)$-modal asimulations. Then $\varphi(x)$ is
$\varkappa$-equivalent to an $(i,j)$-standard translation of a
modal intuitionistic formula.
\end{theorem}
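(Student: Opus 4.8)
The plan is to reduce the statement to the hard direction of Theorem \ref{L:main}, that is, to Lemma \ref{L:hard} and its $(i,j)$-analogues indicated in Section \ref{S:other}, by relativizing to $\varkappa$ every semantic notion used there. I would fix $\Theta = \Sigma_\varphi$ and let $Ax$ be a set of $\Theta$-sentences first-order axiomatizing $\varkappa(\Theta)$. Mimicking Lemma \ref{L:hard}, consider the set $ic^\varkappa(\varphi(x))$ of all $(i,j)$-standard translations of modal intuitionistic formulas that $\varkappa$-follow from $\varphi(x)$, i.e.\ $\{\,\psi(x) \mid \psi(x) \text{ is an } (i,j)\text{-standard translation and } \varphi(x) \models^\varkappa \psi(x)\,\}$. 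The goal is to prove $ic^\varkappa(\varphi(x)) \models^\varkappa \varphi(x)$. Once this is established, the observation recorded just after Definition \ref{D:k} rephrases it as $ic^\varkappa(\varphi(x)) \cup Ax \models \varphi(x)$, so compactness of first-order logic produces a finite subconjunction; this conjunction is itself a standard translation of a modal intuitionistic conjunction and, again by that observation, is $\varkappa$-equivalent to $\varphi(x)$, which is what we want.

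To prove $ic^\varkappa(\varphi(x)) \models^\varkappa \varphi(x)$, I would rerun the model construction of Lemma \ref{L:hard} while keeping every model inside $\varkappa(\Theta)$. One starts from some $M_1 \in \varkappa(\Theta)$ and $a \in U_1$ with $a \models_1 ic^\varkappa(\varphi(x))$, and then, by the same compactness-and-L\"{o}wenheim--Skolem argument as in Lemma \ref{L:hard} --- now phrased in terms of $\varkappa$-satisfiability, equivalently satisfiability together with $Ax$ --- produces $M_2 \in \varkappa(\Theta)$ and $b \in U_2$ with $b \models_2 \varphi(x)$ and $tp_x(M_2, b) \subseteq tp_x(M_1, a)$. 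The only adjustment at this stage is that each auxiliary set whose satisfiability is tested must be augmented with $Ax$, which forces the witnessing models to lie in $\varkappa$.

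The crucial point, and the main obstacle, is that the construction next passes to $\omega$-saturated elementary extensions $M'$, $M''$ of $M_1$, $M_2$ via Lemma \ref{L:ext}, after which Lemma \ref{L:sat} (or, for the other instantiations, its revised version from Section \ref{S:other}) is applied to build a modal asimulation between $M''$ and $M'$. For $\varkappa$-invariance to be applicable, these extensions must themselves belong to $\varkappa(\Theta)$. This is exactly where the hypothesis enters: since $\varkappa(\Theta)$ is axiomatized by the $\Theta$-sentences $Ax$, it is closed under elementary equivalence, and an elementary extension satisfies precisely the same $\Theta$-sentences as its base; hence $M' \models Ax$ and $M'' \models Ax$, so $M', M'' \in \varkappa(\Theta)$. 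The asimulation furnished by Lemma \ref{L:sat} therefore connects two models of $\varkappa$, so $\varkappa$-invariance of $\varphi(x)$ applies and yields $M', a \models \varphi(x)$, whence $M_1, a \models \varphi(x)$ by the elementary equivalence of $M_1$ and $M'$. As $M_1$ and $a$ were arbitrary, $\varphi(x)$ $\varkappa$-follows from $ic^\varkappa(\varphi(x))$.

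For the three remaining choices of $(i,j)$ the argument is identical, substituting the corresponding revised forms of the Key Lemmas from Section \ref{S:other} for Lemma \ref{L:sat}; in every case the sole new ingredient beyond the unrelativized proof is the preservation of $\varkappa$-membership under passage to $\omega$-saturated elementary extensions, which the first-order axiomatizability of each $\varkappa(\Theta)$ guarantees uniformly. (The converse implication, that $\varkappa$-equivalence to an $(i,j)$-standard translation entails $\varkappa$-invariance, is immediate from Corollary \ref{L:c-inv}, since a $\varkappa$-asimulation is in particular an asimulation.)
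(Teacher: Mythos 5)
Your proposal is correct and takes essentially the same route as the paper: relativize the argument of Lemma \ref{L:hard} by adjoining $Ax$ to every compactness and L\"{o}wenheim--Skolem step, and observe that the $\omega$-saturated elementary extensions remain in $\varkappa(\Sigma_\varphi)$ because they satisfy $Ax$ --- which is precisely the point the paper leaves implicit in its closing appeal to ``reasoning exactly as in the proof of Theorem \ref{L:hard}''. The only detail to tighten is the final compactness step: the finite set extracted from $ic^\varkappa(\varphi(x)) \cup Ax \models \varphi(x)$ may contain sentences of $Ax$, which are not standard translations, so (as the paper does by intersecting the finite conjunction with $ic_\varkappa(\varphi(x))$) you must discard the $Ax$-part and re-absorb it into $\models^\varkappa$ via the observation you cite before the remaining conjunction can be called a standard translation $\varkappa$-equivalent to $\varphi(x)$.
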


\begin{proof} Let $Ax$ be the set of first-order
sentences that axiomatizes $\varkappa(\Sigma_\varphi)$. We may
assume that $\varphi(x)$ is
$\varkappa(\Sigma_\varphi)$-satisfiable, otherwise $\varphi(x)$ is
$\varkappa$-equivalent to $ST_{ij}(\bot, x)$ and we are done. In
what follows we will write $ic_\varkappa(\varphi(x))$ for the set
$$
\{ \psi(x) \in int_x(\Sigma_\varphi) \mid \varphi(x)
\models^\varkappa \psi(x) \}
$$

Our strategy will be to show that $ic_\varkappa(\varphi(x))
\models^\varkappa \varphi(x)$. Once this is done we will conclude
that
\[
Ax \cup ic_\varkappa(\varphi(x)) \models \varphi(x).
\]
Then we apply compactness of first-order logic and obtain that
$\varphi(x)$ is equivalent to a finite conjunction
$\bigwedge\Psi(x)$ of formulas from this set. But it follows then
that $\varphi(x)$ is $\varkappa$-equivalent to the conjunction of
the set $ic_\varkappa(\varphi(x)) \cap \Psi(x)$. In fact, by our
choice of $ic_\varkappa(\varphi(x))$ we have
\[
\varphi(x) \models^\varkappa \bigwedge(ic_\varkappa(\varphi(x))
\cap \Psi(x)),
\]
And by $\Psi(x) \subseteq Ax \cup ic_\varkappa(\varphi(x))$ we
have

\[
Ax \cup (ic_\varkappa(\varphi(x)) \cap \Psi(x)) \models \varphi(x)
\]
and hence
\[
ic_\varkappa(\varphi(x)) \cap \Psi(x) \models^\varkappa
\varphi(x).
\]

To show that $ic_\varkappa(\varphi(x)) \models^\varkappa
\varphi(x)$, take any $\Sigma_\varphi$-model $M_1$ and $a \in U_1$
such that $M_1 \in \varkappa$ and $a \models_1
ic_\varkappa(\varphi(x))$. Then, of course, we will also have
$ic_\varkappa(\varphi(x)) \subseteq tp_x(M_1,a)$. Such a model
exists, because $\varphi(x)$ is
$\varkappa(\Sigma_\varphi)$-satisfiable and
$ic_\varkappa(\varphi(x))$ will be $\varkappa$-satisfied in any
$\Sigma_\varphi$-model satisfying $\varphi(x)$. Then we can also
choose a $\Sigma_\varphi$-model $M_2$ and $b \in U_2$ such that
$M_2 \in \varkappa$, $b \models_2 \varphi(x)$, and $tp_x(M_2, b)
\subseteq tp_x(M_1, a)$.

For suppose otherwise. Then for any $\Sigma_\varphi$-model $M \in
\varkappa$ such that $U \subseteq \mathbb{N}$ and any $c \in U$
such that $M, c \models \varphi(x)$ we can choose a modal
intuitionistic formula $I_{(M, c)}$ such that $ST_{ij}(I_{(M, c)},
x)$ is in $tp_x(M, c)$ but not at $tp_x(M_1, a)$. Then consider
the set
\[
S = \{\,\varphi(x)\,\} \cup \{\,\neg ST_{ij}(I_{(M, c)}, x)\mid M
\in \varkappa, M, c \models \varphi(x)\,\}
\]
Let $\{\,\varphi(x), \neg ST_{ij}(I_1, x)\ldots , \neg
ST_{ij}(I_q, x)\,\}$ be a finite subset of this set. If this set
is $\varkappa$-unsatisfiable, then we must have
\[
\varphi(x) \models^\varkappa ST_{ij}(I_1, x)\vee\ldots \vee
ST_{ij}(I_q, x),
\]
but then we will also have
\[
(ST_{ij}(I_1, x)\vee\ldots \vee ST_{ij}(I_q, x)) \in
ic_\varkappa(\varphi(x)) \subseteq tp_x(M_1, a),
\]
and hence $(ST_{ij}(I_1, x)\vee\ldots \vee ST_{ij}(I_q, x))$ will
be true at $(M_1, a)$. But then at least one of $ST_{ij}(I_1,
x)\ldots ,ST_{ij}(I_q, x)$ must also be true at $(M_1, a)$, which
contradicts the choice of these formulas. Therefore, every finite
subset of $S$ is $\varkappa$-satisfiable. But then every finite
subset of the set $S \cup Ax$ is satisfiable as well. By
compactness of first-order logic $S \cup Ax$ is satisfiable, and,
by L\"{o}wenheim-Skolem property of first-order logic, there is a
$\Sigma_\varphi$-model $M'$ and $g \in U'$ such that $U' \subseteq
\mathbb{N}$ and $(M',g)$ satisfies $S \cup Ax$. But then we must
have $M' \in \varkappa$, since  $M'$ is a $\Sigma_\varphi$-model
satisfying the set of axioms for $\varkappa(\Sigma_\varphi)$.

For this model and for this element in it we will have both $M', g
\models ST_{ij}(I_{(M',g)}, x)$ by choice of $I_{(M',g)}$ and $M',
g \not\models ST_{ij}(I_{(M',g)}, x)$ by the satisfaction of $S$,
a contradiction.

Therefore, for any given $\Sigma_\varphi$-model $M_1$ such that
$M_1 \in \varkappa$ and for any $a\in U_1$ satisfying
$ic_\varkappa(\varphi(x))$  we can choose a $\Sigma_\varphi$-model
$M_2$ and $b \in U_2$ such that $M_2 \in \varkappa$, $b \models_2
\varphi(x)$, and $tp_x(M_2, b) \subseteq tp_x(M_1,a)$. Then,
reasoning exactly as in the proof of Theorem \ref{L:hard}, we
conclude that $a \models_1 \varphi(x)$. Therefore, $\varphi(x)$ in
fact $\varkappa$-follows from $ic_\varkappa(\varphi(x))$.
\end{proof}

\begin{theorem}\label{L:int-final}
Let $\varkappa$ be a class of first-order models such that for all
finite $\Theta$ class $\varkappa(\Theta)$ is first-order
axiomatizable. Then a formula $\varphi(x)$ is
$\varkappa$-invariant with respect to $(i,j)$-modal asimulations
iff it is $\varkappa$-equivalent to an $(i,j)$-standard
translation of a modal intuitionistic formula.
\end{theorem}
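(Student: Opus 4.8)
The plan is to read the biconditional as the conjunction of Theorem~\ref{L:int-main} with an almost immediate converse. The implication from $\varkappa$-invariance to $\varkappa$-equivalence is exactly the content of Theorem~\ref{L:int-main}, so the only thing left to supply is the reverse implication: that $\varkappa$-equivalence to an $(i,j)$-standard translation forces $\varkappa$-invariance with respect to $(i,j)$-modal asimulations.

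For this converse I argue directly from the definitions. Suppose $\varphi(x)$ is $\varkappa$-equivalent to $ST_{ij}(I,x)$ for some modal intuitionistic formula $I$; unfolding Definition~\ref{D:k}, this means that $M, c \models \varphi(x) \Leftrightarrow M, c \models ST_{ij}(I,x)$ for every $M \in \varkappa$ and every $c$ in its domain. To verify $\varkappa$-invariance I take an arbitrary $(i,j)$-modal asimulation connecting two $\Theta$-models $M_1, M_2 \in \varkappa$ --- a single relation $A$ when $j = 1$, or an ordered couple $(A,B)$ whose left projection I again call $A$ when $j = 2$, per the convention following Definition~\ref{D:invariance} --- together with elements $a \in U_1$ and $b \in U_2$ satisfying $a\mathrel{A}b$ and $a \models_1 \varphi(x)$; the goal is $b \models_2 \varphi(x)$.

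The argument routes through $ST_{ij}(I,x)$ at both endpoints. Since $M_1 \in \varkappa$, the $\varkappa$-equivalence turns $a \models_1 \varphi(x)$ into $a \models_1 ST_{ij}(I,x)$. Now the asimulation $A$ is, by definition, an ordinary $(i,j)$-modal asimulation: the defining clauses of $(i,j)$-modal asimulations make no reference whatsoever to $\varkappa$, so the membership of $M_1, M_2$ in $\varkappa$ plays no role in $A$ qualifying as one. Hence the easy, left-to-right direction of Theorem~\ref{L:main} applies, giving that $ST_{ij}(I,x)$ is invariant with respect to \emph{every} $(i,j)$-modal asimulation; applied to $A$ this yields $b \models_2 ST_{ij}(I,x)$. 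Finally, $M_2 \in \varkappa$ lets the $\varkappa$-equivalence convert this back into $b \models_2 \varphi(x)$, as required.

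The point to watch --- and the reason the detour through $ST_{ij}(I,x)$ is unavoidable --- is that one may \emph{not} claim unrestricted invariance of $\varphi(x)$ itself: $\varkappa$-equivalence is strictly weaker than logical equivalence, so Theorem~\ref{L:main} can be invoked only for the genuine standard translation $ST_{ij}(I,x)$, never for $\varphi(x)$ directly. The two applications of $\varkappa$-equivalence are legitimate precisely because the endpoints $a$ and $b$ live in models $M_1, M_2$ that belong to $\varkappa$. No compactness, $\omega$-saturation, or L\"{o}wenheim--Skolem argument is needed in this direction, the entire model-theoretic burden having already been discharged in the proof of Theorem~\ref{L:int-main}.
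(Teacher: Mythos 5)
Your proof is correct and follows essentially the same route as the paper's: the forward direction is cited from Theorem~\ref{L:int-main}, and the converse runs the same three-step chain (use $\varkappa$-equivalence at $M_1 \in \varkappa$ to pass from $\varphi(x)$ to $ST_{ij}(I,x)$, apply the invariance half of Theorem~\ref{L:main} across the asimulation, then use $\varkappa$-equivalence at $M_2 \in \varkappa$ to return to $\varphi(x)$). Your explicit remarks on why the detour through $ST_{ij}(I,x)$ is unavoidable and on the $j=2$ projection convention are sound clarifications of points the paper leaves implicit, but the argument itself is the same.
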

\begin{proof} From left to right our theorem follows from
Theorem \ref{L:int-main}. In the other direction, assume that
$\varphi(x)$ is $\varkappa$-equivalent to $ST_{ij}(I,x)$ and
assume that for some $\Theta$ such that $\Sigma_\varphi \subseteq
\Theta$, some $\Theta$-models $M_1$, $M_2$, and some $a \in U_1$
and $b \in U_2$ such that $M_1,M_2 \in \varkappa$, $A$ is a
$(i,j)$-modal $\langle(M_1, a),(M_2, b)\rangle$-asimulation and $a
\models_1 \varphi(x)$. Then, since $ST_{ij}(I,x)$ is
$\varkappa$-equivalent to $\varphi(x)$ and $M_1$ is in
$\varkappa$, we also have $a \models_1 ST_{ij}(I,x)$. By Theore
\ref{L:main} it follows that $b \models_2 ST_{ij}(I,x)$, but since
$ST(I,x)$ is $\varkappa$-equivalent to $\varphi(x)$ and $M_2$ is
in $\varkappa$, we also have $b \models_2\varphi(x)$. Therefore,
$\varphi(x)$ is $\varkappa$-invariant with respect to
asimulations.
\end{proof}

Theorems \ref{L:int-main} and \ref{L:int-final} imply that
$(i,j)$-modal asimulations as criteria for equivalence to a
standard translation of a modal intuitionistic formula are easily
scalable down to any first-order definable class of models. As all
the conditions on intended models for intuitionistic modal logic,
that were presented in the existing literature, seem to be easily
first-order definable, this means that the criterion for
equivalence of a formula in correspondence language to an
$(i,j)$-standard translation of modal intuitionistic formula over
the class of intended models will be just invariance with respect
to $(i,j)$-modal asimulations between the intended models.

\section{Conclusion}\label{S:conclusion}
In this paper we have defined and vindicated $4$ different
versions of modal asimulations, capturing the $4$ different
fragments of classical first-order logic induced by the
corresponding systems of satisfaction clauses for modal
intuitionistic logic. In doing so, we were concentrating on
different variants of Kripke semantics for intuitionistic
modalities, present in the existing literature.

However, it is easy to see, that our approach is but an instance
of a quite general algorithm that can be easily generalized to
deal with a much wider class of extensions of intuitionistic
propositional logic. It is not clear at the moment, how far such a
generalization is able to reach; we only mention here one
conjecture to give reader a flavor of what can be expected in the
way of future work along the research lines presented above.

It is easy to see that all the intuitionistic modalities
considered above are instances of a general scheme: the effect of
the modality on its only argument is always a superposition of
`guarded' quantifiers, that is to say, quantifiers, restricted by
a binary relation which is a part of Kripke model for this
modality. To be more precise, let us call a generalized
intuitionistic modality any modality $\mu I$ whose induced
standard translation looks as follows:
$$
ST(\mu I, x) = Q_ny_n(R_n(y_n,y_{n-1}\odot_n Q_{n-1}(\ldots
Q_1y_1(R_1(y_1,x) \odot_1 ST(I, x))\ldots)),
$$
where $n \geq 1$, and for all $i$ such that $1 \leq i \leq n$ we
assume $Q_i \in \{ \forall, \exists \}$. Further, $\odot_i = \to$
if $Q_i = \forall$ and $\odot_i = \wedge$ if $Q_i =
\exists$.\footnote{The relations $R_1,\ldots, R_n$ need not be all
different.}

Then our \textbf{\emph{conjecture}} is that to capture the
extension of intuitionistic propositional logic by modality $\mu$,
one generally needs to define asimulation as a tuple $(A_1,\ldots,
A_{k_\mu})$ of binary relations, where $k_\mu\leq n$ is the number
of quantifier alternations in $ST(\mu I, x)$. The set of
conditions on basic asimulation will then have to be enlarged by
$k_\mu$ new conditions $r_1,\ldots, r_{k_\mu}$, each having one of
the two following forms: either
$$
a_1\mathrel{A_p}b_1 \wedge \bigwedge^m_{s =
1}(\iota_j(S_s)(b_s,b_{s + 1})) \Rightarrow \exists a_2\ldots a_m
\in U_i(\bigwedge^m_{s = 1}(\iota_i(S_s)(a_s,a_{s + 1})) \wedge
a_m\mathrel{A_q}b_m),
$$
or
$$
a_1\mathrel{A_p}b_1 \wedge \bigwedge^m_{s =
1}(\iota_i(S_s)(a_s,a_{s + 1})) \Rightarrow \exists b_2\ldots b_m
\in U_j(\bigwedge^m_{s = 1}(\iota_j(S_s)(b_s,b_{s + 1})) \wedge
a_m\mathrel{A_q}b_m),
$$
for all $a_1,\ldots, a_m \in U_i$ and all $b_1,\ldots, b_m \in
U_j$. Here, $i$, $j$ have the same meaning as in Definition
\ref{D:asim21} and $\{ S_1,\ldots, S_m \} \subseteq \{ R_1,\ldots,
R_n \}$. Moreover, we will ensure that in the last condition
$r_{k_\mu}$ we will have $A_p = A_1$ and that the form of
$r_{k_\mu}$ will be the former of the above forms if $Q_n$ is
$\forall$ and the latter if $Q_n$ is $\exists$.

More precisely, we define $r_1,\ldots, r_{k_\mu}$ by induction on
$n$ as follows.

If $n = 1$ and $Q_1 = \forall$, we set:
$$
r_1 := (a_1\mathrel{A_1}b_1 \wedge \iota_j(R_1)(b_1,b_2))
\Rightarrow \exists a_2 \in U_i(\iota_i(R_1)(a_1,a_2) \wedge
a_2\mathrel{A_j}b_2)),
$$
and if $Q_1 = \exists$, we set
$$
r_1 := (a_1\mathrel{A_1}b_1 \wedge \iota_i(R_1)(a_1,a_2))
\Rightarrow \exists b_2 \in U_j(\iota_j(R_1)(b_1,b_2) \wedge
a_2\mathrel{A_j}b_2)).
$$
Incidentally, the fact that these singleton sets of conditions are
adequate already follows from the above proofs concerning
\eqref{E:box1} and \eqref{E:diam1}.

Assume, further, that $n = s + 1$ for $s \geq 1$.

As induction hypothesis, we suppose that the set of conditions
$r_1,\ldots, r_{k_{\mu^-}}$ for $\mu^-$, where

$$
ST(\mu^- I, x) = Q_{n-1}(\ldots Q_1y_1(R_1(y_1,x) \odot_1 ST(I,
x))\ldots))
$$
is already defined.

To define $r_1,\ldots, r_{k_\mu}$, we need to distinguish the
following cases:

\emph{Case 1}. $Q_n = Q_{n - 1}$. Then $k_{\mu} = k_{\mu^-}$ and
we do not need to introduce new conditions. Instead, we transform
condition $r_{k_{\mu^-}}$ in the following way:

\emph{Case 1.1} If $Q_n = Q_{n - 1} = \forall$, then by induction
hypothesis condition $r_{k_{\mu^-}}$  has the form:
$$
a_1\mathrel{A_1}b_1 \wedge \bigwedge^m_{s =
1}(\iota_j(S_s)(b_s,b_{s + 1})) \Rightarrow \exists a_2\ldots a_m
\in U_i(\bigwedge^m_{s = 1}(\iota_i(S_s)(a_s,a_{s + 1})) \wedge
a_m\mathrel{A_q}b_m).
$$
We then replace $r_{k_{\mu^-}}$ by the following rule
$r_{k_{\mu}}$, where we assume $a' \in U_i$, $b' \in U_j$:
\begin{align*}
&a'\mathrel{A_1}b' \wedge (\iota_j(R_n)(b',b_1) \wedge
\bigwedge^m_{s = 1}(\iota_j(S_s)(b_s,b_{s + 1}))
\Rightarrow\notag\\
&\qquad\qquad\qquad\Rightarrow \exists a_1\ldots a_m \in
U_i(\iota_i(R_n)(a',a_1) \wedge\bigwedge^m_{s =
1}(\iota_i(S_s)(a_s,a_{s + 1})) \wedge a_m\mathrel{A_q}b_m).
\end{align*}

\emph{Case 1.2} If $Q_n = Q_{n - 1} = \exists$, then by induction
hypothesis condition $r_{k_{\mu^-}}$  has the form:
$$
a_1\mathrel{A_1}b_1 \wedge \bigwedge^m_{s =
1}(\iota_i(S_s)(a_s,a_{s + 1})) \Rightarrow \exists b_2\ldots b_m
\in U_j(\bigwedge^m_{s = 1}(\iota_j(S_s)(b_s,b_{s + 1})) \wedge
a_m\mathrel{A_q}b_m).
$$
We then replace $r_{k_{\mu^-}}$ by the following rule
$r_{k_{\mu}}$, where we assume $a' \in U_i$, $b' \in U_j$:
\begin{align*}
&a'\mathrel{A_1}b' \wedge (\iota_i(R_n)(a',a_1) \wedge
\bigwedge^m_{s = 1}(\iota_i(S_s)(a_s,a_{s + 1}))
\Rightarrow\notag\\
&\qquad\qquad\qquad\Rightarrow \exists b_1\ldots b_m \in
U_j(\iota_j(R_n)(b',b_1) \wedge\bigwedge^m_{s =
1}(\iota_j(S_s)(a_s,a_{s + 1})) \wedge a_m\mathrel{A_q}b_m).
\end{align*}

\emph{Case 2}. $Q_n \neq Q_{n - 1}$. Then $k_{\mu} = k_{\mu^-} +
1$ and we need to increase the number of conditions by one. We do
this as follows:

\emph{Case 2.1} If $Q_n = \forall$, $Q_{n - 1} = \exists$, then by
induction hypothesis condition $r_{k_{\mu^-}}$  has the form:
$$
a_1\mathrel{A_1}b_1 \wedge \bigwedge^m_{s =
1}(\iota_i(S_s)(a_s,a_{s + 1})) \Rightarrow \exists b_2\ldots b_m
\in U_j(\bigwedge^m_{s = 1}(\iota_j(S_s)(b_s,b_{s + 1})) \wedge
a_m\mathrel{A_q}b_m).
$$

we then replace $r_{k_{\mu^-}}$ by the following condition
$r'_{k_{\mu^-}}$:
$$
a_1\mathrel{A_{k_\mu}}b_1 \wedge \bigwedge^m_{s =
1}(\iota_i(S_s)(a_s,a_{s + 1})) \Rightarrow \exists b_2\ldots b_m
\in U_j(\bigwedge^m_{s = 1}(\iota_j(S_s)(b_s,b_{s + 1})) \wedge
a_m\mathrel{A_q}b_m),
$$

and we add the following new condition $r_{k_\mu}$:
$$
a_1\mathrel{A_1}b_1 \wedge \iota_j(R_n)(b_1,b_2)) \Rightarrow
\exists a_2 \in U_i(\iota_i(R_n)(a_1,a_2) \wedge
a_2\mathrel{A_{k_\mu}}b_2).
$$

\emph{Case 2.2} If $Q_n = \exists$, $Q_{n - 1} = \forall$, then by
induction hypothesis condition $r_{k_{\mu^-}}$  has the form:
$$
a_1\mathrel{A_1}b_1 \wedge \bigwedge^m_{s =
1}(\iota_i(S_s)(a_s,a_{s + 1})) \Rightarrow \exists b_2\ldots b_m
\in U_j(\bigwedge^m_{s = 1}(\iota_j(S_s)(b_s,b_{s + 1})) \wedge
a_m\mathrel{A_q}b_m),
$$

we then replace $r_{k_{\mu^-}}$ by the following condition
$r'_{k_{\mu^-}}$:
$$
a_1\mathrel{A_{k_\mu}}b_1 \wedge \bigwedge^m_{s =
1}(\iota_i(S_s)(a_s,a_{s + 1})) \Rightarrow \exists b_2\ldots b_m
\in U_j(\bigwedge^m_{s = 1}(\iota_j(S_s)(b_s,b_{s + 1})) \wedge
a_m\mathrel{A_q}b_m),
$$

and we add the following new condition $r_{k_\mu}$:
$$
a_1\mathrel{A_1}b_1 \wedge \iota_j(R_n)(b_1,b_2)) \Rightarrow
\exists a_2 \in U_i(\iota_i(R_n)(a_1,a_2) \wedge
a_2\mathrel{A_{k_\mu}}b_2).
$$

It is straightforward to verify that our systems of rules for both
\eqref{E:box2} and \eqref{E:diam2} were generated according to the
Cases 1.1 and 2.1 of this inductive definition, respectively.
Note, however, that this general scheme is not always the most
effective one. For example, consider an extension of
intuitionistic propositional logic, where both \eqref{E:diam2} and
\eqref{E:diam1} are available. This would allow to considerably
simplify the corresponding notion of asimulation, indeed, one
could  get rid in this case of the second binary relation $B$ and
define asimulations as appropriate type of single binary relation
$A$.

The other thing worth noting is that the above mentioned scheme
for defining asimulations which capture expressive powers of
generalized intuitionistic modalities is neither the only nor the
most abstract among the generalizations that naturally come to
mind in this respect.

Therefore, in our future work, we hope both to provide
substantiation for the above-defined scheme concerning the
generalized intuitionistic modalities and further pursue the
manifold research opportunities opening along this research line.

\section{Acknowledgements}

To be inserted.

\bibliographystyle{habbrv}

\bibliography{intbib}

 }
\end{document}